\newcommand{\cA}{{\mathcal A}}
\newcommand{\cN}{{\mathcal N}}
\newcommand{\oA}{{\overline A}}
\newcommand{\oB}{{\overline B}}
\newcommand{\oC}{{\overline C}}
\newcommand{\oS}{{\overline S}}
\newcommand{\del}{\delta}
\newcommand{\sig}{\sigma}
\newcommand{\zet}{\zeta}
\renewcommand{\phi}{\varphi}
\newcommand{\longc}{,\dotsc,}
\newcommand{\longp}{+\dotsb+}
\newcommand{\longu}{\cup\dotsb\cup}
\renewcommand{\o}[1]{{\overline{#1}}}
\newcommand{\est}{\varnothing}
\newcommand{\seq}{\subseteq}
\newcommand{\stm}{\setminus}
\renewcommand{\>}{\rangle}
\theoremstyle{plain}
\newtheorem{claim}{Claim}
\newtheorem{lemma}{Lemma}
\newtheorem{theorem}{Theorem}
\newtheorem{corollary}{Corollary}
\newtheorem{proposition}{Proposition}
\newtheorem{alphaclaim}{Claim}
\newtheorem{primelemma}{Lemma}
\newtheorem{primetheorem}{Theorem}
\newtheorem*{mainlemma}{Main Lemma}
\newcommand{\refc}[1]{\ref{c:#1}}
\newcommand{\refl}[1]{\ref{l:#1}}
\newcommand{\refm}[1]{\ref{m:#1}}
\newcommand{\reft}[1]{\ref{t:#1}}
\newcommand{\refp}[1]{\ref{p:#1}}
\newcommand{\refs}[1]{\ref{s:#1}}
\newcommand{\refb}[1]{\cite{b:#1}}
\newcommand{\refe}[1]{\eqref{e:#1}}
\title[Symmetric Kneser's Theorem]%
  {Symmetric Kneser's Theorem \\ with trios and $3$-Transform}
\author[]{David J. Grynkiewicz}
\email{diambri@hotmail.com}
\address{Department of Mathematical Sciences, University of Memphis,
  Memphis TN 38152, USA}
\author[]{Vsevolod F. Lev}
\email{seva@math.haifa.ac.il}
\address{Department of Mathematics, The University of Haifa at Oranim,
  Tivon 36006, Israel}
\begin{document}
\baselineskip=16pt

\begin{abstract}
We give a new equivalent restatement and a new proof in terms of trios to the
classical Kneser's theorem. In the finite case, our restatement takes the
following, particularly symmetric shape: if $A$, $B$, and $C$ are subsets of
a finite abelian group $G$ such that $A+B+C\ne G$, then, denoting by $H$ the
period of the sumset $A+B+C$, we have
  $$ |A|+|B|+|C| \le |G|+|H|. $$
The proof is based on an extension of the familiar Dyson transform onto set
systems containing three (or more) sets.
\end{abstract}

\maketitle

\section{Introduction: Kneser's Theorem and Trios}

For a subset $S$ of an abelian group, let $\pi(S)$ denote the \emph{period
(stabilizer)} of $S$; that is, $\pi(S)$ is the subgroup consisting of all
those group elements $g$ with $S+g=S$.

One of the most basic yet robust results in additive combinatorics, Kneser's
theorem, is standardly formulated as follows.
\begin{theorem}[Kneser \cite{b:kn1,b:kn2}]\label{t:K}
If $A$ and $B$ are finite subsets of an abelian group, then
  $$ |A+B| \ge |A|+|B|-|\pi(A+B)|. $$
\end{theorem}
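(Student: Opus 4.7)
\emph{Plan.} I plan to deduce Theorem~\reft{K} from the symmetric three-variable inequality advertised in the abstract: for subsets $A,B,C$ of a finite abelian group $G$ with $A+B+C\ne G$, one has $|A|+|B|+|C|\le|G|+|H|$, where $H=\pi(A+B+C)$.

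Working first in a finite ambient group $G$, I may assume $A+B\ne G$ (otherwise $|A+B|=|G|\ge|A|+|B|-|\pi(A+B)|$ is trivial, since $\pi(G)=G$). Set $C:=-(G\stm(A+B))$, so $|C|=|G|-|A+B|$. If $a+b+c=0$ with $a\in A$, $b\in B$, $c\in C$, then $-c=a+b\in A+B$ contradicts $-c\in G\stm(A+B)$; hence $A+B+C\ne G$ and the symmetric inequality applies. To identify $H$, write $S=A+B$ and observe that $g\in S+C$ iff $s-g\notin S$ for some $s\in S$, iff $S-g\ne S$, iff $g\notin\pi(S)$. Thus $A+B+C=G\stm\pi(S)$, whose period is exactly $\pi(S)=\pi(A+B)$; substituting $|C|=|G|-|A+B|$ into the symmetric inequality rearranges directly to $|A+B|\ge|A|+|B|-|\pi(A+B)|$. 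Extending from a finite ambient group to a general one is a standard reduction (one may translate and pass to the finitely generated subgroup $\<A\cup B\>$, then quotient by a suitable finite-index subgroup).

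The substance of the proof therefore lies in the symmetric three-variable inequality, which I would attack by induction on $|A|+|B|+|C|$ together with a Dyson-style \emph{3-transform}: given a trio $(A,B,C)$ with $A+B+C\ne G$, the transform would produce a new trio $(A',B',C')$ with the same coordinate sum $|A'|+|B'|+|C'|=|A|+|B|+|C|$, with $A'+B'+C'\seq A+B+C$, and with at least one coordinate strictly smaller. The main obstacle will be to design the transform so as to guarantee genuine progress at each step and to describe precisely the terminal configurations on which no further reduction is possible --- presumably trios whose three sets are coset-aligned with $H$, where the bound can then be read off directly.
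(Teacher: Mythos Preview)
Your reduction from Kneser's theorem to the symmetric three-set inequality is correct and is essentially the paper's own equivalence argument: the paper packages your computation $A+B+C=G\stm\pi(A+B)$ as Claim~\refm{complement}. One difference is that the paper does not reduce to a finite ambient group; it instead formulates Theorem~\reft{Krest} for trios allowing one co-finite component, which absorbs the infinite case directly. Your parenthetical ``standard reduction'' via a finite-index quotient can be made to work, but keeping the period of $A+B$ unchanged under the quotient map needs more care than you indicate.

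The real gap is in your inductive attack on the symmetric inequality. First, a small point: inducting on $|A|+|B|+|C|$ cannot drive the argument, since (as you yourself note) the $3$-transform preserves this sum; one needs a lexicographic-type order on the size vector, and the paper uses $\sigma(A,B,C)=(|G|,|\oA|,-|B|,-|C|)$. More seriously, your diagnosis of the obstacle is off. The terminal configurations---where no translate of the transform makes progress---are actually the easy case: Lemma~\refl{stopcond} shows they force $C-C\seq\pi(B)$ and $B-B\seq\pi(A)$, which for an aperiodic maximal trio gives $|B|=|C|=1$ and the bound is immediate. The genuine difficulty is the \emph{non}-terminal case in which the transform does move but the resulting sumset $A^*+B^*+C^*$ acquires a nontrivial period $H$. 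Applying the inductive hypothesis to $(A^*,B^*,C^*)$ then only yields $\del(A,B,C)=\del(A^*,B^*,C^*)\le|H|$, which is useless when one is trying to prove $\del\le 1$ for an aperiodic trio. Bridging this gap is exactly the content of the paper's Main Lemma: it shows that the rounding slack $|(A^*+H)\stm A^*|+|(B^*+H)\stm B^*|+|(C^*+H)\stm C^*|$ is at least $|H|-1$, so that after passing to the aperiodic quotient trio in $G/H$ the induction closes. This is where essentially all of the work in the paper lies, and your plan does not yet anticipate it.
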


The goal of the present paper is to restate Kneser's theorem in a
``symmetric'' form, and give our restatement an independent proof in terms of
trios.

Following \refb{bdm}, by a \emph{trio} in an abelian group $G$, we mean a
triple $(A,B,C)$ of non-empty subsets of $G$ such that $A+B+C\ne G$ and each
of $A,B$, and $C$ is either finite or co-finite in $G$. Since the sum of two
co-finite subsets of an infinite group is the whole group, every trio can
have at most one infinite component.

The \emph{deficiency} of a trio $(A,B,C)$, denoted $\del(A,B,C)$, is defined
as follows. If $A,B$, and $C$ are all finite, while the underlying group $G$
is infinite, then we let $\del(A,B,C)=-\infty$; this case is in fact of no
importance as we are interested in ``large'' trios. Otherwise, if $G$ is
infinite, then exactly one of the sets $A$, $B$, and $C$ is co-finite, while
the other two are finite, and if $G$ is finite, then all these three sets are
both finite and co-finite. In either case, we can rename the sets involved so
that $A$ is co-finite while $B$ and $C$ are finite, and with this assumption,
we let $\del(A,B,C)=-|\oA|+|B|+|C|$, where for a set $S\seq G$, we write
$\oS:=G\stm S$. Notice that, if $G$ is finite, then
\begin{multline*}
  -|\oA|+|B|+|C| = |A|-|\oB|+|C| = |A|+|B|-|\oC|  = |A|+|B|+|C|-|G|,
\end{multline*}
showing that deficiency is well-defined in this case.%
\footnote{The last expression suggests that \emph{redundancy} might be a more
  intuitive term than \emph{deficiency}; however, we stick with the
  terminology of \refb{bdm}.}

We can now present our restatement of Kneser's theorem
(cf.~\cite[Theorem~3.6]{b:bdm}).
\begin{theorem}\label{t:Krest}
For any trio $(A,B,C)$, we have $\del(A,B,C) \le |\pi(A+B+C)|$.
\end{theorem}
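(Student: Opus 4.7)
The plan is to deduce Theorem~\ref{t:Krest} directly from the classical Kneser's Theorem~\ref{t:K}, treating the finite and infinite cases for the ambient group $G$ separately.

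I would first dispatch the finite case. Here $\del(A,B,C)=|A|+|B|+|C|-|G|$, and the target inequality becomes
\[ |A|+|B|+|C|\le|G|+|\pi(A+B+C)|. \]
I would apply Kneser twice: first to $(A,B)$, obtaining $|A+B|\ge|A|+|B|-|\pi(A+B)|$, and then to $(A+B,C)$, obtaining $|A+B+C|\ge|A+B|+|C|-|\pi(A+B+C)|$. The key observation is that $\pi(A+B)$ already stabilizes $A+B+C$, so $\pi(A+B)\le\pi(A+B+C)$ as subgroups and hence $|\pi(A+B)|\le|\pi(A+B+C)|$. Chaining the two Kneser bounds then gives
\[ |A+B+C|\ge|A|+|B|+|C|-2|\pi(A+B+C)|. \]
Since $A+B+C$ is a proper subset of $G$ and a union of $\pi(A+B+C)$-cosets, one also has $|A+B+C|\le|G|-|\pi(A+B+C)|$; combining these two inequalities yields the claim at once.

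For the infinite case I would rename so that $A$ is cofinite and $B,C$ are finite (if all three are finite then $\del=-\infty$ and there is nothing to prove). The assumption $A+B+C\ne G$ provides some $x\in G$ with $x-b-c\in\oA$ for every $b\in B$ and $c\in C$, so the translate $x-(B+C)$ sits inside the finite set $\oA$; in particular $|B+C|\le|\oA|$. Applying Kneser to the finite pair $(B,C)$ then gives $|B|+|C|-|\pi(B+C)|\le|B+C|\le|\oA|$, that is, $\del(A,B,C)\le|\pi(B+C)|$. Finally, any element of $\pi(B+C)$ stabilizes $A+B+C$, so $\pi(B+C)\le\pi(A+B+C)$, and since $\pi(B+C)$ is automatically finite (it lies inside $(B+C)-(B+C)$), one concludes $|\pi(B+C)|\le|\pi(A+B+C)|$, closing the argument.

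The main obstacle I foresee is philosophical rather than technical: the paper promises an \emph{independent} proof in the language of trios, based on a three-set Dyson transform, and my proposal bypasses that program by invoking Kneser as a black box. To make the argument genuinely trio-native, one would have to replay the Dyson-transform induction simultaneously on three sets while keeping the symmetric quantity $\del(A,B,C)$ as the tracked invariant, and the delicate step will almost certainly be designing the three-set transform so that some well-chosen measure (such as $|A|+|B|+|C|-|A+B+C|$, or $\min(|A|,|B|,|C|)$) strictly decreases outside the base case, all while the periods $\pi(A+B+C)$ behave controllably under the transform.
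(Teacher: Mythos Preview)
Your argument is correct and is essentially the paper's own brief derivation of Theorem~\ref{t:Krest} from Kneser's theorem, given immediately after Claim~\ref{m:complement}; your self-diagnosis in the final paragraph is accurate. One remark: your separate treatment of the finite case via a double application of Kneser is unnecessary---your infinite-case argument works verbatim when $G$ is finite (simply declare $A$ to be the co-finite member of the trio), and that uniform version is exactly how the paper phrases it.

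The paper's \emph{main} proof of Theorem~\ref{t:Krest}, however, is the independent argument of Sections~\ref{s:mainlemma} and~\ref{s:proof}: one introduces a three-set Dyson transform $\tau$, attaches to each trio the lexicographic invariant $\sigma(A,B,C)=(|G|,|G\setminus A|,-|B|,-|C|)$, and inducts on $\sigma$; the heart of the matter is a Main Lemma bounding from below the quantity $|(A^*+H)\setminus A^*|+|(B^*+H)\setminus B^*|+|(C^*+H)\setminus C^*|$, where $(A^*,B^*,C^*)=\tau(A,B,C)$ and $H=\pi(A^*+B^*+C^*)$, so that the induction hypothesis applied to the quotient trio in $G/H$ closes the loop. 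What your route buys is brevity; what the paper's route buys is an autonomous proof of Kneser itself (via the equivalence already established), phrased symmetrically in three sets, together with the $n$-transform as a tool of independent interest.
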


Observe that, in the finite case, Theorem~\reft{Krest} can be given a
particularly simple shape.
\begin{primetheorem}
If $A$, $B$, and $C$ are subsets of a finite abelian group $G$ such that
$A+B+C\ne G$, then
  $$ |A|+|B|+|C| \le |G| + |\pi(A+B+C)|. $$
\end{primetheorem}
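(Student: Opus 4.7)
The plan is to derive this statement as an essentially formal consequence of Theorem~\reft{Krest}, since the two differ only in the unpacking of the definition of $\del$ in the finite setting. The one substantive issue is that Theorem~\reft{Krest} is phrased for trios, whose components must be non-empty, so the edge cases where one of $A,B,C$ is empty need a one-line separate argument.

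First I would dispose of the degenerate case. If one of $A$, $B$, $C$, say $C$, is empty, then $A+B+C=\est$, so $\pi(A+B+C)=G$ and the inequality reduces to $|A|+|B|\le 2|G|$, which is immediate. One could alternatively just note the convention $\pi(\est)=G$ and check that the bound is vacuously satisfied.

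With all three sets non-empty, $(A,B,C)$ qualifies as a trio in $G$: each of the three subsets is automatically both finite and co-finite in the finite group $G$, and the hypothesis $A+B+C\ne G$ is exactly the non-triviality condition built into the trio definition. The computation carried out immediately after the definition of deficiency records that, in the finite case,
\[
  \del(A,B,C) = |A|+|B|+|C|-|G|.
\]
Applying Theorem~\reft{Krest} to $(A,B,C)$ therefore gives
\[
  |A|+|B|+|C|-|G| \;=\; \del(A,B,C) \;\le\; |\pi(A+B+C)|,
\]
which rearranges to the claimed bound $|A|+|B|+|C|\le |G|+|\pi(A+B+C)|$.

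I do not expect any real obstacle here: the entire content of the restatement lies in its more symmetric presentation rather than in any new inequality, and the only point requiring care is the empty-set boundary case handled above. The interesting work is all concentrated in Theorem~\reft{Krest}, whose proof is where the extended Dyson/$3$-transform argument promised in the abstract will actually do the heavy lifting.
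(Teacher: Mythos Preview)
Your proposal is correct and matches the paper's own treatment: the primed theorem is introduced there as the finite-case unpacking of Theorem~\reft{Krest} via the identity $\del(A,B,C)=|A|+|B|+|C|-|G|$, with no separate proof given. Your handling of the empty-set edge case (where the triple is not a trio) is a sensible addition that the paper leaves implicit.
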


We keep using the convention that, for a set $S$, the complement of $S$ in
the underlying group is denoted by $\oS$.

The equivalence of
Theorems~\reft{K} and \reft{Krest} is easy to establish using the following
simple fact.
\begin{claim}\label{m:complement}
For any subset $S$ of an abelian group, we have $\pi(S-\oS)=\pi(S)$.
Moreover, if $S$ is either finite or co-finite, then indeed
  $$ S-\oS = \oS-S = \o{\pi(S)}. $$
\end{claim}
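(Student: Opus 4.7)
The claim has two parts, and I will derive both from a single observation: $0\notin S-\oS$ whenever $\est\neq S\neq G$ (the two degenerate cases $S=\est$ and $S=G$ are trivial, with both sides of every asserted equality reducing to $G$). The main obstacle, and really the only non-cosmetic step, will be to establish the set-theoretic dictionary
\[ g\notin S-\oS \ \Longleftrightarrow\ S\seq S+g, \]
which I plan to prove by unwinding definitions: $g\in S-\oS$ means some $s\in S$ satisfies $s-g\in\oS$, equivalently $S-g\not\seq S$, equivalently $S\not\seq S+g$.

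\medskip

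\textbf{The stabilizer identity $\pi(S-\oS)=\pi(S)$.} For the inclusion $\pi(S)\seq\pi(S-\oS)$, I will just observe that $S+g=S$ gives $(S-\oS)+g=(S+g)-\oS=S-\oS$. For the reverse, the plan is to use that $S-\oS$ is a union of $\pi(S-\oS)$-cosets and misses $0$, hence misses the entire subgroup $\pi(S-\oS)$. So for any $g\in\pi(S-\oS)$, both $g$ and $-g$ lie outside $S-\oS$, and by the dictionary this gives $S\seq S+g$ together with $S\seq S-g$; the two containments force $S=S+g$, i.e.\ $g\in\pi(S)$.

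\medskip

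\textbf{The explicit description.} Assume first that $S$ is finite. The inclusion $S-\oS\seq\o{\pi(S)}$ is already contained in the previous step (every $g\in\pi(S)$ satisfies $S=S+g$, hence $g\notin S-\oS$). For the reverse, I would observe that if $g\notin\pi(S)$ then $S+g\neq S$, and since $|S+g|=|S|<\infty$ the two sets cannot be nested; in particular $S\not\seq S+g$, so $g\in S-\oS$ by the dictionary. This yields $S-\oS=\o{\pi(S)}$. The identity $\oS-S=\o{\pi(S)}$ then follows from $\oS-S=-(S-\oS)$ together with the fact that $\pi(S)$, being a subgroup, is symmetric about $0$. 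If instead $S$ is co-finite, I would apply the finite case to $\oS$ in place of $S$, using $\pi(\oS)=\pi(S)$ and $\o{\oS}=S$ to convert everything back. This also makes transparent why finiteness is needed only for the ``moreover'' part: it is exactly what is required to upgrade the one-sided containment $S\seq S+g$ to the equality $S=S+g$.
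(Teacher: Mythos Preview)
Your argument is correct. It rests on the same ``dictionary'' the paper uses ($g\in S-\oS \Leftrightarrow S-g\not\seq S$, together with its mirror for $\oS-S$), and your treatment of the finite/co-finite ``moreover'' part is essentially identical to the paper's. The one genuine difference is in how you obtain the harder inclusion $\pi(S-\oS)\le\pi(S)$ in the general case: the paper first proves the identity $\o{\pi(S)}=(S-\oS)\cup(\oS-S)$ and then runs the stabilizer chain
\[
\pi(S)=\pi\big(\o{\pi(S)}\big)=\pi\big((S-\oS)\cup(\oS-S)\big)\ge\pi(S-\oS)\ge\pi(S),
\]
whereas you bypass this by noting that $0\notin S-\oS$, so the $\pi(S-\oS)$-periodic set $S-\oS$ misses the whole subgroup $\pi(S-\oS)$, and then apply the dictionary to both $g$ and $-g$. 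Your route is a little more direct; the paper's has the side benefit of yielding the general (no finiteness assumption) identity $\o{\pi(S)}=(S-\oS)\cup(\oS-S)$. One cosmetic slip: in the degenerate cases $S=\est$ and $S=G$, the sets appearing in the ``moreover'' clause all reduce to $\est$, not to $G$.
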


\begin{proof}
For a group element $g$, we have $g\notin\pi(S)$ if and only if either
$S+g\nsubseteq S$, or $S-g\nsubseteq S$. The former relation is equivalent to
$g\in\oS-S$, and the latter to $g\in S-\oS$. We thus conclude that
$\o{\pi(S)}=(S-\oS)\cup(\oS-S)$, whence, in view of $S-\oS=-(\oS-S)$,
  $$ \pi(S)=\pi(\overline{\pi(S)})=\pi\big((S-\oS)\cup(\oS-S)\big)
                                            \ge\pi(S-\oS) \ge \pi(S), $$
implying the first assertion.

Furthermore, if $S$ is either finite or co-finite, then $g\notin\pi(S)$ if
and only if $S+g\not\seq S$, which yields $\o{\pi(S)}=\oS-S$. Switching the
roles of $S$ and $\oS$ and observing that $\pi(S)=\pi(\oS)$, we get
$\o{\pi(S)}=S-\oS$, and the second assertion follows.
\end{proof}

To derive Theorem~\reft{Krest} from Theorem~\reft{K}, assume that $(A,B,C)$
is a trio with $B$ and $C$ finite and $A$ co-finite, and fix a group element
$g\notin A+B+C$; we then have $B+C\seq g-\oA$, whence, by Theorem~\reft{K},
  $$ \del(A,B,C) \le |B|+|C|-|B+C| \le |\pi(B+C)| \le |\pi(A+B+C)|. $$

Conversely, assuming Theorem~\reft{Krest}, and given finite, non-empty
subsets $A$ and $B$ of an abelian group $G$, let $C:=-\o{A+B}$. If $C=\est$,
then $A+B=G$ and $|A+B|\ge|A|+|B|-|\pi(A+B)|$ is immediate. If $C\ne\est$,
then $0\notin A+B-\o{A+B}=A+B+C$, showing that $(A,B,C)$ is a trio, and from
Theorem~\reft{Krest} and Claim~\refm{complement} (applied with $S=A+B$), it
follows that
  $$ |A|+|B|-|A+B| = \del(A,B,C) \le |\pi(A+B+C)| = |\pi(A+B)|. $$

We have shown that Theorems~\reft{K} and~\reft{Krest} are equivalent in the
sense that each of them follows easily from the other one. Our goal now is to
give Theorem~\reft{Krest} an independent, ``symmetric'' proof. As preparation
steps, in the next section we collect some background facts about trios, and
in Section~\refs{trans} we develop a multiple-set transform, the basic tool
employed in our proof. The proof itself is then presented in
Sections~\refs{mainlemma} and~\refs{proof}. Concluding remarks are gathered
in Section~\refs{concrem}.

\section{Trios}\label{s:trios}

In this section we provide the background about trios needed for the proof of
Theorem~\reft{Krest}. Most of the material here is contained, in this or
another form, in \refb{bdm}.

Refining the definition from the previous section, for an abelian group $G$
and an element $g\in G$, we say that a triple $(A,B,C)$ of non-empty subsets
of $G$ is a \emph{$g$-trio} if $g\notin A+B+C$ and each of $A,B$, and $C$ is
either finite or co-finite in $G$.

The \emph{period} of the trio $(A,B,C)$ is the period of the sumset $A+B+C$.
Since this sumset is either a finite or a co-finite subset of the underlying
group, the period of a trio is always finite. The trio $(A,B,C)$ is
\emph{aperiodic} if its period is the trivial subgroup, and \emph{periodic}
otherwise. It is easily verified that if $(A,B,C)$ is a trio in an abelian
group $G$, and $H=\pi(A+B+C)$, then the images of $A$, $B$, and $C$ under the
canonical homomorphism $G\to G/H$ form an aperiodic trio in the quotient
group $G/H$.

Clearly, if $(A,B,C)$ is a $g$-trio in an abelian group $G$, then
$(A-a,B-b,C-c)$ is a $(g-a-b-c)$-trio in $G$ for any $a,b,c\in G$, and both
trios share the same period.

As a direct consequence of Claim~\refm{complement}, we have the following
corollary.

\begin{corollary}\label{c:C-oC}
Suppose that $(A,B,C)$ is a $g$-trio in an abelian group $G$. If
$C=g-\o{A+B}$, then, letting $H:=\pi(A+B+C)$, we have $A+B+C=G\stm(g+H)$ and
$\pi(C)=H$.
\end{corollary}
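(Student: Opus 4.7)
The plan is to deduce everything from the identity $S - \overline{S} = \overline{\pi(S)}$ supplied by Claim~\refm{complement}, applied with $S := A+B$. As a preliminary, I would verify that $A+B$ is itself finite or co-finite, so that this form of the claim is available: since at most one of $A,B,C$ can be infinite, either both $A$ and $B$ are finite (whence $A+B$ is finite), or exactly one of them is co-finite and the other is finite, in which case $A+B$ contains a translate of a co-finite set and is therefore co-finite.

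Granted this, the hypothesis $C = g - \overline{A+B}$ translates directly into
  $$ A+B+C \;=\; g + \bigl((A+B) - \overline{A+B}\bigr)
            \;=\; g + \overline{\pi(A+B)}
            \;=\; G \setminus \bigl(g + \pi(A+B)\bigr), $$
where the middle equality is Claim~\refm{complement}. Thus $A+B+C$ is exactly the complement of a coset of $\pi(A+B)$. Since any coset of a subgroup $K$ has period $K$, and since a set and its complement share the same period, I would then read off $H = \pi(A+B+C) = \pi(A+B)$; substituting back into the display yields the first conclusion $A+B+C = G \setminus (g+H)$.

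For $\pi(C) = H$, I would argue that the period is invariant under translation and under complementation (the latter coming from the first assertion of Claim~\refm{complement}, or equally from the immediate observation that translation by $g$ fixes $S$ iff it fixes $\overline{S}$), which gives $\pi(C) = \pi(g - \overline{A+B}) = \pi(\overline{A+B}) = \pi(A+B) = H$. I do not foresee any real obstacle: the argument is essentially bookkeeping built on Claim~\refm{complement}, with the only mildly non-trivial point being the preliminary check that $A+B$ belongs to the class of sets (finite or co-finite) to which the second half of that claim applies.
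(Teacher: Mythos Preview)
Your argument is correct and follows exactly the route the paper intends: the paper states this corollary as ``a direct consequence of Claim~\refm{complement}'' with no further proof, and you have simply written out the details of that deduction --- the computation $A+B+C = g + \bigl((A+B)-\overline{A+B}\bigr) = g + \overline{\pi(A+B)}$, the identification $H=\pi(A+B)$, and the translation/complementation invariance of the period --- together with the preliminary check that $A+B$ is finite or co-finite so that the second assertion of Claim~\refm{complement} applies.
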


Furthermore, we have the following lemma.
\begin{lemma}\label{l:khinchin}
Suppose that $(A,B,C)$ is a $g$-trio and let $C':=g-\o{A+B}$. Then $(A,B,C')$
is also a $g$-trio, $C\seq C'$, and $\pi(A+B+C')\le\pi(A+B+C)$.
\end{lemma}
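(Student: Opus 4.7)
The plan is to establish the three conclusions of the lemma in turn, reducing the period inequality to Claim~\refm{complement} applied to the sumset $A+B$.

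For the inclusion $C \seq C'$, I would pick $c \in C$ and use $g \notin A+B+C$ to conclude $g-c \notin A+B$; that is, $g-c \in \o{A+B}$, which gives $c \in g-\o{A+B} = C'$. To verify that $(A,B,C')$ is itself a $g$-trio, I would then check three items: (a) $C'\ne\est$, which follows from $C\seq C'$; (b) $g\notin A+B+C'$, since an equation $g=a+b+(g-x)$ with $x\in\o{A+B}$ would force $x=a+b\in A+B$, a contradiction; and (c) $C'$ is finite or co-finite, which reduces to the same property for $A+B$. For that last point, if $A$ and $B$ are both finite then so is $A+B$; otherwise (in an infinite $G$, at most one of $A,B,C$ can be co-finite) exactly one of $A,B$ is co-finite and the other finite non-empty, and translating the co-finite set by any element of the finite one shows $A+B$ is co-finite.

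For the period inequality my plan is to compute $A+B+C'$ explicitly. From $C'=g-\o{A+B}$ I get
$$ A+B+C' \;=\; g+(A+B)-\o{A+B}, $$
and since $A+B$ is finite or co-finite, Claim~\refm{complement} applied with $S=A+B$ yields $(A+B)-\o{A+B}=\o{\pi(A+B)}$, whence
$$ A+B+C' \;=\; g+\o{\pi(A+B)} \;=\; G\stm\bigl(g+\pi(A+B)\bigr). $$
This gives $\pi(A+B+C')=\pi(A+B)$ immediately. On the other hand, any $h\in\pi(A+B)$ satisfies $(A+B+C)+h=\bigl((A+B)+h\bigr)+C=A+B+C$, so $\pi(A+B)\seq\pi(A+B+C)$. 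Combining the two facts yields $\pi(A+B+C')\le\pi(A+B+C)$, as required. I expect the only point really needing care to be the verification that $A+B$ is finite or co-finite so that Claim~\refm{complement} is applicable; once that is in hand, the rest is essentially bookkeeping.
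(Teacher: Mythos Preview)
Your proof is correct and follows essentially the same route as the paper: both arguments establish $C\seq C'$ and $g\notin A+B+C'$ by unwinding the definition, and both reduce the period inequality to Claim~\refm{complement} applied with $S=A+B$, together with the trivial inclusion $\pi(A+B)\le\pi(A+B+C)$. The only cosmetic difference is that the paper quotes Corollary~\refc{C-oC} (itself an immediate consequence of Claim~\refm{complement}) to obtain $\pi(A+B+C')=\pi(C')=\pi(A+B)$, whereas you inline that computation by writing out $A+B+C'=G\stm(g+\pi(A+B))$ explicitly.
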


\begin{proof}
The definition of $C'$ readily implies that $g\notin A+B+C'$, and that $C'$
is either finite or co-finite (the latter follows from finiteness or
co-finiteness of $A$ and $B$); consequently, $(A,B,C')$ is a $g$-trio. Since
$(A,B,C)$ is a $g$-trio, we have $g\notin A+B+C$, whence $C\seq
g-\o{A+B}=C'$. Finally, by Corollary~\refc{C-oC},
  $$ \pi(A+B+C') = \pi(C') = \pi(A+B) \le \pi(A+B+C). $$
\end{proof}

The trio $(A,B,C)$ is \emph{contained} in the trio $(A',B',C')$ if $A\seq
A'$, $B\seq B'$, and $C\seq C'$; in this case, the former trio is also said
to be a \emph{subtrio} of the latter, and the latter a \emph{supertrio} of
the former.

We say that $(A,B,C)$ is a \emph{maximal} $g$-trio if (in addition to being a
$g$-trio) it is not properly contained in any other $g$-trio; that is, for
any $g$-trio $(A',B',C')$ with $A\seq A'$, $B\seq B'$, and $C\seq C'$, we
actually have $A=A'$, $B=B'$, and $C=C'$. By Lemma~\refl{khinchin}, for
$(A,B,C)$ to be a maximal $g$-trio, it is necessary and sufficient that
$A=g-\o{B+C}$, $B=g-\o{C+A}$, and $C=g-\o{A+B}$. Hence, by
Corollary~\refc{C-oC}, if $(A,B,C)$ is a maximal $g$-trio, then, letting
$H:=\pi(A+B+C)$, we have $\pi(A)=\pi(B)=\pi(C)=H$ and $A+B+C=G\stm(g+H)$. In
particular, if $(A,B,C)$ is a maximal \emph{aperiodic} $g$-trio, then
$A+B+C=G\stm\{g\}$.

\begin{lemma}\label{l:gf}
If $(A,B,C)$ is a maximal $g$-trio, then it is in fact a maximal $f$-trio for
each $f\notin A+B+C$.
\end{lemma}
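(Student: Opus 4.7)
The plan is to use the structural characterization of maximal $g$-trios that was already established in the paragraph preceding the lemma. Specifically, if $(A,B,C)$ is a maximal $g$-trio, then by Lemma~\refl{khinchin} and Corollary~\refc{C-oC} we have the identities
\[
  A = g-\o{B+C}, \quad B = g-\o{C+A}, \quad C = g-\o{A+B},
\]
together with $\pi(A)=\pi(B)=\pi(C)=H$ and $A+B+C = G\stm(g+H)$, where $H:=\pi(A+B+C)$.

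First I would observe that the condition $f\notin A+B+C$ is equivalent, via the identity $A+B+C=G\stm(g+H)$, to $f\in g+H$; that is, $f-g\in H$. Since $A$, $B$, and $C$ remain either finite or co-finite, and $f\notin A+B+C$ by hypothesis, the triple $(A,B,C)$ is automatically an $f$-trio.

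Next I would verify that $(A,B,C)$ satisfies the three maximality identities with $g$ replaced by $f$. Using $C=g-\o{A+B}$, I get
\[
  f-\o{A+B} = f - (g - C) = (f-g) + C = C,
\]
the last equality because $f-g\in H = \pi(C)$. The same computation, applied to the cyclic permutations of the three sets, yields $A = f-\o{B+C}$ and $B = f-\o{C+A}$. Appealing once more to the characterization of maximal trios recalled at the start, $(A,B,C)$ is a maximal $f$-trio.

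There is no real obstacle here; the lemma is essentially a direct consequence of the fact that a maximal $g$-trio has $\pi(A)=\pi(B)=\pi(C)=\pi(A+B+C)$, so that translating the distinguished element $g$ within the coset $g+H$ does not affect any of the defining equalities. The only thing one must be careful with is to invoke the characterization of maximality in both directions: once to extract the identities defining the maximal $g$-trio, and once to assemble the analogous identities into maximality of the $f$-trio.
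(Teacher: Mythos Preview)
Your argument is correct, and it is a mild variant of the paper's own proof rather than a carbon copy. Both proofs start from the maximality identities $A=g-\o{B+C}$, $B=g-\o{C+A}$, $C=g-\o{A+B}$, and both reduce the question to showing that $(f-g)+C=C$ (and similarly for $A$ and $B$). You deduce this from the pre-established structural facts $A+B+C=G\stm(g+H)$ and $\pi(C)=H$, which immediately give $f-g\in H=\pi(C)$. The paper instead avoids invoking the period structure: it notes the inclusion $C\seq f-\o{A+B}=(f-g)+C$ coming from $(A,B,C)$ being an $f$-trio, and then argues that a finite-or-co-finite set cannot be a \emph{proper} subset of one of its translates, forcing equality. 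Your route is a bit more conceptual and leans on facts already recorded in the text; the paper's route is slightly more self-contained, needing only the finite/co-finite hypothesis rather than the full period identity.
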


\begin{proof}
Clearly $(A,B,C)$ is an $f$-trio, whence
\begin{equation}\label{e:gf-a}
  A\seq f-\o{B+C},\ B\seq f-\o{C+A},\ \text{and}\ C\seq f-\o{A+B}.
\end{equation}
On the other hand, since $(A,B,C)$ is a maximal $g$-trio,
\begin{equation}\label{e:gf-b}
  A=g-\o{B+C},\ B=g-\o{C+A},\ \text{and}\ C=g-\o{A+B}.
\end{equation}

We now claim that none of the inclusions in \refe{gf-a} is strict; for if we
had, for instance, $C\subsetneq f-\o{A+B}$, then in view of \refe{gf-b} this
would imply $C\subsetneq(f-g)+C$, which is impossible since $C$ is either
finite or co-finite. This shows that $(A,B,C)$ is a maximal $f$-trio.
\end{proof}

With Lemma~\refl{gf} in mind, we can speak about maximal trios without
indicating the specific value of $g$. In addition, Lemma~\refl{gf} shows that
$(A,B,C)$ is a maximal $g$-trio for some group element $g$ if and only if it
is a maximal trio; that is, not properly contained in any other trio.

\begin{lemma}\label{l:astrio}
For a $g$-trio $(A,B,C)$, define  $A':=g-\o{B+C}$, and then subsequently
$B':=g-\o{A'+C}$ and $C':=g-\o{A'+B'}$. Then
\begin{itemize}
\item[i)]   $A\seq A',\ B\seq B'$, and $C\seq C'$;
\item[ii)]  $(A',B',C')$ is a maximal $g$-trio;
\item[iii)] $\pi(A'+B'+C')\le\pi(A+B+C)$.
\end{itemize}
\end{lemma}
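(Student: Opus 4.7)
The plan is to obtain all three assertions by iterating Lemma \refl{khinchin}, once for each coordinate in turn. Although that lemma is stated with the substitution carried out in the third slot, its proof nowhere privileges that coordinate, so for any $g$-trio $(X,Y,Z)$ the triple obtained by replacing $X$ with $g-\o{Y+Z}$ is again a $g$-trio, contains the original one in the first coordinate, and has period contained in $\pi(X+Y+Z)$. I will use this symmetric form of Lemma \refl{khinchin} freely.

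For parts (i) and (iii), I would perform three successive substitutions. Replacing $A$ by $A'=g-\o{B+C}$ turns $(A,B,C)$ into a $g$-trio $(A',B,C)$ with $A\seq A'$ and $\pi(A'+B+C)\le\pi(A+B+C)$. Substituting $B'=g-\o{A'+C}$ for $B$ in this new trio produces a $g$-trio $(A',B',C)$ with $B\seq B'$ and $\pi(A'+B'+C)\le\pi(A'+B+C)$. A third application with $C':=g-\o{A'+B'}$ yields the $g$-trio $(A',B',C')$, the containment $C\seq C'$, and a final period inequality; chaining these proves (i) and (iii).

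For part (ii), I would invoke the characterization stated just after Lemma \refl{khinchin}: a $g$-trio is maximal precisely when each component equals $g$ minus the complement of the sum of the other two. The relation $C'=g-\o{A'+B'}$ is the defining equation of $C'$. Using the containments $B\seq B'$ and $C\seq C'$ from (i), the inclusions $B+C\seq B'+C'$ and $A'+C\seq A'+C'$ give
\[ A'=g-\o{B+C}\supseteq g-\o{B'+C'}\quad\text{and}\quad B'=g-\o{A'+C}\supseteq g-\o{A'+C'}. \]
The opposite inclusions $A'\seq g-\o{B'+C'}$ and $B'\seq g-\o{A'+C'}$ follow at once from the fact, established in (i), that $(A',B',C')$ is itself a $g$-trio: the relation $g\notin A'+B'+C'$ is equivalent to both $A'\cap(g-B'-C')=\est$ and $B'\cap(g-A'-C')=\est$.

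The only potential obstacle is observing that Lemma \refl{khinchin} extends symmetrically to all three coordinates, but this is apparent from its proof, which treats the three sets interchangeably. Once that symmetry is in hand, the argument is essentially bookkeeping, and the full proof should be very short.
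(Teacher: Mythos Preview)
Your proposal is correct and follows essentially the same approach as the paper: parts (i) and (iii) are obtained by three successive applications of Lemma~\refl{khinchin} (with the same remark that the lemma applies symmetrically to each coordinate), and part (ii) is deduced from the same chain of inclusions using $B\seq B'$ and $C\seq C'$. The only cosmetic difference is that for (ii) the paper argues directly that any $g$-supertrio $(A'',B'',C'')$ must satisfy $A''=A'$ via the chain $A'\seq A''\seq g-\o{B''+C''}\seq g-\o{B'+C'}\seq g-\o{B+C}=A'$, whereas you verify the equivalent characterization $A'=g-\o{B'+C'}$, etc.; the underlying inclusions are identical.
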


\begin{proof}
The fact that $(A',B',C')$ is a $g$-trio containing $(A,B,C)$, and also the
relation $\pi(A'+B'+C')\le\pi(A+B+C)$, follow readily by repeated application
of Lemma~\refl{khinchin}. To see why $(A',B',C')$ is maximal, notice that, if
it is contained in a $g$-trio $(A'',B'',C'')$, then
  $$ A' \seq A'' \seq g-\o{B''+C''}
             \seq g-\o{B'+C'} \seq g-\o{B+C} = A' $$
implying $A''=A'$, and similarly $B''=B'$ and $C''=C'$.
\end{proof}

As it follows from Lemma~\refl{astrio}, every aperiodic trio can be embedded
into a maximal aperiodic trio.

A trio is said to be \emph{deficient} if its deficiency is positive. To
conclude this section, we record the following corollary of
Theorem~\reft{Krest}.
\begin{corollary}\label{c:maxdef}
If $(A,B,C)$ is a maximal, deficient trio, then
  $$ \del(A,B,C)=|\pi(A+B+C)|. $$
\end{corollary}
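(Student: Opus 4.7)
The plan is to combine the upper bound from Theorem~\reft{Krest} with a matching divisibility-based lower bound. Applied to the trio at hand, Theorem~\reft{Krest} gives $\delta(A,B,C)\le|H|$, where $H:=\pi(A+B+C)$; hence the task reduces to establishing $\delta(A,B,C)\ge|H|$.

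The key input is the consequence of maximality recorded in the paragraph following Lemma~\refl{gf}: for a maximal trio, $\pi(A)=\pi(B)=\pi(C)=H$. This says each of $A,B,C$ (and, by Claim~\refm{complement}, each of $\overline A,\overline B,\overline C$) is a union of cosets of $H$, so each of these sets has cardinality divisible by $|H|$ whenever it is finite.

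I would then split according to whether the ambient group $G$ is finite or infinite. In the finite case $\delta(A,B,C)=|A|+|B|+|C|-|G|$; the first three terms are multiples of $|H|$ by the periodicity just noted, and $|G|$ is a multiple of $|H|$ by Lagrange's theorem, so $\delta(A,B,C)$ is an integer multiple of $|H|$. In the infinite case one may relabel so that $A$ is co-finite while $B,C$ are finite, giving $\delta(A,B,C)=-|\overline A|+|B|+|C|$; each of $|\overline A|,|B|,|C|$ is again a multiple of $|H|$, and the same conclusion follows.

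Finally, deficiency of the trio means $\delta(A,B,C)>0$; combined with $|H|$ dividing $\delta(A,B,C)$, this forces $\delta(A,B,C)\ge|H|$, matching the upper bound and yielding the claimed equality. The argument is essentially a ``divisibility squeezes an inequality into an equality'' step, so I do not foresee any serious obstacle: the real content is packaged into maximality, which has already done the work of pinning down the common period $H$ of all three sets.
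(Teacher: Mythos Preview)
Your argument is correct and is essentially the same as the paper's: both use maximality to get $\pi(A)=\pi(B)=\pi(C)=H$, deduce that $\del(A,B,C)$ is divisible by $|H|$, and combine $\del(A,B,C)>0$ with the upper bound from Theorem~\reft{Krest}. The paper streamlines slightly by just assuming $B$ and $C$ finite (which covers both your cases at once), and it does not bother invoking Claim~\refm{complement}, since $\pi(\overline A)=\pi(A)$ is immediate.
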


\begin{proof}
Let $H:=\pi(A+B+C)$ and assume for definiteness that $B$ and $C$ are finite.
By maximality, we have $\pi(A)=\pi(B)=\pi(C)=H$, so that $|B|$, $|C|$ and
$|\oA|$ are all divisible by $|H|$; hence also $\del(A,B,C)$ is divisible by
$|H|$. On the other hand, deficiency gives $\del(A,B,C)>0$, and the
conclusion now follows from Theorem~\reft{Krest}.
\end{proof}

\section{The $n$-transform}\label{s:trans}

In this section we introduce a version of the classical Dyson transform for a
set system potentially involving more than two sets. We call it the
\emph{$n$-transform}, where $n$ can be substituted with the actual number of
sets; thus, the Dyson transform is the $2$-transform, and what we ultimately
need for the proof of Theorem~\reft{Krest} is the $3$-transform.

Let $\cA=(A_\nu)_{\nu\in\cN}$ be a system of subsets of some ground set
$G$. For an integer $i\ge 1$, denote by $\tau_i(\cA)$ the set of all those
elements of $G$ belonging to at least $i$ sets from $\cA$, and let
$\tau(\cA)=(\tau_i(\cA))_{i\ge 1}$. If $\cA$ is finite with, say, $|\cN|=n$,
then the sets $\tau_i(\cA)$ are empty for $i>n$, and we then identify
$\tau(\cA)$ with the finite sequence $(\tau_i(\cA))_{1\le i\le n}$; notice
that, in this case, $\tau_{n}(\cA)=\cap_{\nu\in\cN}A_\nu$, and that we always
have $\tau_1(\cA)=\cup_{\nu\in\cN} A_\nu$.

Although we are interested in the situation where $\cA$ is a finite sequence
of subsets of an \emph{abelian group}, we start with two general
set-theoretic properties of the $n$-transform.

\begin{lemma}\label{l:major}
If $A_1\longc A_n$ are finite sets, then letting
 $(A^*_1\longc A^*_n)=\tau(A_1\longc A_n)$, we have
\begin{equation}\label{e:major}
  |A^*_1|\longp|A^*_k| \ge |A_1|\longp|A_k|\ \; \text{for}\ k\in[1,n]
\end{equation}
with equality for $k=n$. If, indeed, equality holds in \refe{major} \emph{for
each $k\in[1,n]$}, then $A_1\supseteq\dotsb\supseteq A_n$ (whence $A^*_k=A_k$
for all $k\in[1,n]$).
\end{lemma}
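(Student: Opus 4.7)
The plan is to prove this by a straightforward double-counting argument based on the \emph{multiplicity function}. For each $g\in G$, let $m(g):=|\{j\in[1,n]:g\in A_j\}|$ record the number of sets in the system containing $g$. By definition of $\tau$, the element $g$ belongs to $A^*_i$ precisely when $m(g)\ge i$, so $g$ lies in exactly $m(g)$ of the sets $A^*_i$, namely $A^*_1\longc A^*_{m(g)}$. This gives the two basic identities
\[
  |A^*_1|\longp|A^*_k| = \sum_{g\in G}\min(k,m(g)),
  \qquad
  |A_1|\longp|A_k| = \sum_{g\in G}|\{j\in[1,k]:g\in A_j\}|,
\]
valid for every $k\in[1,n]$.

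For the inequality \refe{major}, I would simply note that $|\{j\in[1,k]:g\in A_j\}|\le\min(k,m(g))$ pointwise (bounded by $k$ because there are only $k$ indices $j$, and by $m(g)$ because that is the total count over all of $[1,n]$). Summing over $g$ yields the desired inequality. For $k=n$, the pointwise bound becomes an equality since $\{j\in[1,n]:g\in A_j\}$ has cardinality exactly $m(g)$, which matches $\min(n,m(g))$; hence the two running sums agree at $k=n$.

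For the equality case, suppose equality holds in \refe{major} for \emph{each} $k\in[1,n]$. Then for every $g\in G$ the pointwise bound must be tight at every $k$:
\[
  |\{j\in[1,k]:g\in A_j\}| = \min(k,m(g)),\qquad k=1\longc n.
\]
Subtracting the identity for $k-1$ from that for $k$ (and using the convention that the expressions vanish at $k=0$), the left side gives the indicator of ``$g\in A_k$'', while the right side gives the indicator of ``$m(g)\ge k$''. Thus $g\in A_k$ if and only if $m(g)\ge k$, i.e.\ $A_k=A^*_k$ for every $k$; and since the sets $A^*_k$ are manifestly nested as $A^*_1\supseteq\dotsb\supseteq A^*_n$ (being defined by the monotone condition $m(g)\ge k$), the chain $A_1\supseteq\dotsb\supseteq A_n$ follows.

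I do not anticipate a real obstacle here: the whole argument rests on the single observation that $\tau$ sorts the multiplicities $m(g)$ into a nonincreasing ``profile'', and that $|A^*_1|\longp|A^*_k|$ is exactly the sum of $\min(k,m(g))$, which majorizes any partial column sum of the incidence matrix. The only point requiring a modicum of care is the telescoping step in the equality case, where one must run the identity for consecutive values of $k$ and compare indicator functions.
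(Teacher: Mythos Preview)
Your argument is correct. The core double-counting via the multiplicity function $m(g)$ is the same observation that underlies the paper's proof of the case $k=n$, but from there the two arguments diverge. For the inequality with $k<n$, the paper reduces to the case $k=n$ by replacing $A_{k+1}\longc A_n$ with empty sets and noting that this does not change $|A_1|\longp|A_k|$ but can only decrease $|A^*_1|\longp|A^*_k|$; you instead establish the pointwise bound $|\{j\in[1,k]:g\in A_j\}|\le\min(k,m(g))$ directly, which is a cleaner route to the same conclusion. For the equality characterization, the paper runs an induction on $n$: from $|A^*_1|=|A_1|$ one gets $A_1=A_1\longu A_n$, hence $A_j\seq A_1$ for all $j$, and then $(A^*_2\longc A^*_n)=\tau(A_2\longc A_n)$ allows the inductive step. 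Your telescoping of the pointwise identities to extract the indicator equality $[g\in A_k]=[m(g)\ge k]$ is a genuinely different and more direct argument; it yields $A_k=A^*_k$ for every $k$ in one stroke, without induction. Both proofs are short, but yours keeps the whole lemma at the level of elementary pointwise comparisons, whereas the paper's reduction-and-induction approach has the minor advantage of making transparent why replacing tail sets by $\est$ is the ``right'' monotonicity move --- a perspective that resurfaces in Lemma~\refl{major-infi}.
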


\begin{proof}
The equality $|A^*_1|\longp|A^*_n|=|A_1|\longp|A_n|$ follows by observing
that for every element $g$ of the ground set, the number of the sets $A_i$
that contain $g$ is equal to the number of the sets $A^*_i$ containing $g$.
For $1\le k<n$, replacing each of the sets $A_{k+1}\longc A_n$ with the empty
set (which does not affect the sum $|A_1|\longp|A_k|$, and can only make the
sum $|A^*_1|\longp|A^*_k|$ smaller), we reduce the situation to the case
$k=n$ just considered.

To prove the second assertion, we first notice that if equality holds in
\refe{major} for all $k\in[1,n]$, then $|A^*_1|=|A_1|\longc |A^*_n|=|A_n|$,
and then use induction by $n$. The case $n=1$ is immediate, and we assume
therefore that $n\ge 2$. Since $A^*_1=A_1\longu A_n$, from $|A^*_1|=|A_1|$ we
derive that, in fact, $A_1=A^*_1=A_1\longu A_n$, whence $A_k\subseteq A_1$
for all $k\in[2,n]$. Hence, for each $k\in[2,n]$, the set $A^*_k$ consists of
all those elements contained in at least $k-1$ of the sets $A_2\longc A_n$;
that is, $(A^*_2\longc A^*_n)=\tau(A_2\longc A_n)$. By the induction
hypothesis, we have then $A_2\supseteq\dotsb\supseteq A_n$ and the assertion
follows.
\end{proof}

\begin{lemma}\label{l:stab}
For a sequence of sets $(A_1,A_2,\ldots)$ to be stable under the
$n$-transform, it is necessary and sufficient that
 $A_1\supseteq A_2\supseteq\dotsc$.
\end{lemma}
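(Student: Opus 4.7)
The plan is to give very short proofs of both directions by exploiting the monotonicity $\tau_1(\cA)\supseteq\tau_2(\cA)\supseteq\dotsb$, which holds tautologically: any element lying in at least $i+1$ of the sets also lies in at least $i$ of them.

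For the necessity direction, I would simply observe that if $(A_1,A_2,\dotsc)$ is stable under the $n$-transform, then $A_i=\tau_i(\cA)$ for every $i\ge 1$, and the chain of inclusions $A_1\supseteq A_2\supseteq\dotsb$ follows at once from the tautological monotonicity just noted.

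For the sufficiency direction, I would fix $k\ge 1$ and check that $\tau_k(\cA)=A_k$ under the assumption $A_1\supseteq A_2\supseteq\dotsb$. The inclusion $A_k\seq\tau_k(\cA)$ is immediate because any $g\in A_k$ lies in each of $A_1\longc A_k$ and therefore in at least $k$ of the sets. Conversely, if $g$ lies in at least $k$ of the sets, say $g\in A_{i_1}\cap\dotsb\cap A_{i_k}$ with $i_1<\dotsb<i_k$, then $i_k\ge k$, hence $g\in A_{i_k}\seq A_k$, giving $\tau_k(\cA)\seq A_k$.

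There is no genuine obstacle here; the statement is essentially a reformulation of the defining property of $\tau$, and the only small point requiring mention is the trivial monotonicity of the $\tau$-sequence. No case analysis is needed depending on whether the sequence $(A_1,A_2,\dotsc)$ is finite or infinite, since the same argument works verbatim in both cases (with the convention that $A_i=\est$ for $i>n$ in the finite case).
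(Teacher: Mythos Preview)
Your proposal is correct and essentially identical to the paper's proof: both argue sufficiency by verifying directly that a nested sequence satisfies $\tau_k(\cA)=A_k$, and both argue necessity via the tautological fact that any output of $\tau$ is automatically a descending chain. You simply spell out the two inclusions for sufficiency in slightly more detail than the paper does.
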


\begin{proof}
Clearly, the condition is sufficient: if $A_1\supseteq A_2\supseteq\dotsc$,
then $\tau_k(A_1,A_2,\ldots)=A_k$ for each $k\ge 1$. It is also necessary
for, in general, if $(B_1,B_2,\ldots)$ is an image of some sequence under the
$n$-transform, then $B_1\supseteq B_2\supseteq\dotsb$.
\end{proof}

We now turn to the properties of the $n$-transform specific to subsets of
abelian groups.

For integers $a_1\longc a_n,b_1\longc b_n$, we write
 $(a_1\longc a_n)\prec(b_1\longc b_n)$ if $(b_1\longc b_n)$ majorizes
$(a_1\longc a_n)$; that is, if $a_1\longp a_k\le b_1\longp b_k$ for each
$k\in[1,n]$, with equality for $k=n$ and strict inequality for at least one
$k\in[1,n-1]$. Notice that, if $(a_1\longc a_n)\prec(b_1\longc b_n)$, then
$(a_1\longc a_n)$ precedes $(b_1\longc b_n)$ also in the lexicographic order
$\prec_\ell$.

\begin{lemma}\label{l:stopcond} 
For any finite subsets $A_1\longc A_n$ of an abelian group, one of the
following holds:
\begin{itemize}
\item[i)]  There exist elements $a_k\in A_k\ (k\in [1,n])$ such that,
    letting $(A^*_1\longc A^*_n):=\tau(A_1-a_1\longc A_n-a_n)$, we have
      $$ (|A_1|\longc|A_n|)\prec(|A^*_1|\longc|A^*_n|). $$
\item[ii)] We have $A_k-A_k\seq\pi(A_{k-1})$ for all $k\in [2,n]$; that
    is, for each $k\in[2,n]$, the set $A_k$ is contained in a coset of
    the period of the set $A_{k-1}$.
\end{itemize}
\end{lemma}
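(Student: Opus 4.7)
The strategy is by contrapositive: assuming (i) fails, I deduce (ii). The starting observation is that Lemma~\refl{major}, applied to the translates $A_k - a_k$, already guarantees the partial-sum inequalities $|A_1^*| \longp |A_k^*| \ge |A_1| \longp |A_k|$ for every $k \in [1,n]$ (with equality at $k=n$), no matter how the $a_k$'s are chosen. The definition of $\prec$ demands equality at $k=n$ \emph{together with} strict inequality somewhere in $[1,n{-}1]$. So the negation of (i), applied uniformly over every tuple $(a_1\longc a_n)\in A_1\times\dotsb\times A_n$, is precisely equality in all these partial-sum inequalities --- which is exactly the hypothesis of the second assertion of Lemma~\refl{major}. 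Consequently, for every such tuple the translates form a nested chain
\[ A_1 - a_1 \supseteq A_2 - a_2 \supseteq \dotsb \supseteq A_n - a_n. \]

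From this universal chain condition I extract (ii) as follows. Fix $k\in[2,n]$ and any $a_{k-1}\in A_{k-1}$ (the remaining $a_i$'s are immaterial and may be picked arbitrarily). Letting $a_k$ range over $A_k$, the inclusion $A_k - a_k \seq A_{k-1} - a_{k-1}$ unions up to
\[ (A_k - A_k) + a_{k-1} \;=\; \bigcup_{a_k\in A_k}(A_k - a_k + a_{k-1}) \;\seq\; A_{k-1}. \]
Varying $a_{k-1}$ through $A_{k-1}$ then yields $A_{k-1} + x \seq A_{k-1}$ for every $x\in A_k - A_k$; since $A_{k-1}$ is finite, this containment upgrades to equality, placing $x$ in $\pi(A_{k-1})$. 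Hence $A_k - A_k \seq \pi(A_{k-1})$ for each $k\in[2,n]$, which is (ii).

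The main conceptual point to get right is the first step: realizing that the failure of (i) is not merely a statement about one clever choice of translates, but the statement that \emph{every} choice yields equality in all the Lemma~\refl{major} partial sums. It is this universal quantifier that feeds into the second part of Lemma~\refl{major} and produces the chain condition simultaneously for all translate tuples, which is what the subsequent union/finiteness argument requires. Once this hurdle is cleared, no induction on $n$ is needed --- each $k\in[2,n]$ is handled independently.
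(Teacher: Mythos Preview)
Your proof is correct and follows essentially the same route as the paper's: both argue by contrapositive, use Lemma~\refl{major} to convert the failure of $\prec$ (for every choice of translates) into the chain condition $A_1-a_1\supseteq\dotsb\supseteq A_n-a_n$, and then vary $a_{k-1}$ and $a_k$ to obtain $A_k-A_k+A_{k-1}\seq A_{k-1}$ and hence $A_k-A_k\seq\pi(A_{k-1})$. Your write-up is somewhat more explicit about the universal quantifier over translate tuples and about invoking finiteness in the last step, but the argument is the same.
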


\begin{proof}
By Lemma~\refl{major}, if $(|A_1|\longc|A_n|)\prec(|A^*_1|\longc|A^*_n|)$
does \emph{not} hold for some particular choice of the elements $a_k\in A_k$,
then for each $k\in[2,n]$, we have $A_k-a_k\seq A_{k-1}-a_{k-1}$, and
therefore $A_k-a_k+a_{k-1}\seq A_{k-1}$. If now $a_k\in A_k$ and $a_{k-1}\in
A_{k-1}$ can be chosen arbitrarily, this leads to $A_k-A_k+A_{k-1}\seq
A_{k-1}$, whence $A_k-A_k\seq \pi(A_{k-1})$.
\end{proof}

\begin{lemma}\label{l:basic} 
If $A_1\longc A_n$ are subsets of an abelian group, then letting
$(A^*_1\longc A^*_n)=\tau(A_1\longc A_n)$ we have
  $$ A^*_1\longp A^*_n \seq A_1\longp A_n. $$
\end{lemma}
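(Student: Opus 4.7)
The plan is to reduce this to a bipartite matching problem and invoke Hall's theorem. Fix arbitrary elements $a^*_i\in A^*_i$ for $i\in[1,n]$; to prove the desired inclusion it suffices to exhibit a permutation $\sigma$ of $[1,n]$ with $a^*_i\in A_{\sigma(i)}$ for every $i$, since then $a^*_1\longp a^*_n$ lies in $A_{\sigma(1)}\longp A_{\sigma(n)}=A_1\longp A_n$.

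To produce such a $\sigma$, I would form the bipartite graph $\Gamma$ on vertex classes $[1,n]$ (indexing the $a^*_i$'s) and $[1,n]$ (indexing the $A_j$'s), putting an edge between $i$ and $j$ whenever $a^*_i\in A_j$. The definition of the $n$-transform, $a^*_i\in\tau_i(A_1\longc A_n)$, exactly says that the degree of each left-vertex $i$ in $\Gamma$ is at least $i$. A perfect matching in $\Gamma$ is precisely the permutation I want.

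To verify Hall's condition, take a non-empty $I\seq[1,n]$ and let $i^*:=\max I$. On the one hand $|I|\le i^*$ because $I\seq[1,i^*]$, and on the other hand the neighborhood of the single vertex $i^*$ already has size at least $i^*$ (its degree in $\Gamma$). Hence $|N(I)|\ge|N(\{i^*\})|\ge i^*\ge|I|$, and Hall's theorem yields the required matching.

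The only conceptual wrinkle I can foresee is that this seems to silently assume a finite index set, whereas the $n$-transform is introduced for arbitrary $\cN$; but the conclusion of the lemma as stated only concerns the finite case $A_1\longc A_n$, so Hall's theorem applies directly. No properties of the abelian group are used beyond the associativity/commutativity needed to rearrange $A_{\sigma(1)}\longp A_{\sigma(n)}$ back into $A_1\longp A_n$, which is why the statement is not specific to any structural feature of $G$.
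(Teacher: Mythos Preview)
Your proof is correct and follows essentially the same idea as the paper's: both fix elements $a^*_i\in A^*_i$ and produce a permutation $\sigma$ with $a^*_i\in A_{\sigma(i)}$, exploiting that $a^*_i$ lies in at least $i$ of the original sets. The only difference is packaging: the paper constructs $\sigma$ directly by a greedy recursion (assign to $a^*_k$ any unused index $i_k$ with $a^*_k\in A_{i_k}$, which exists since $a^*_k$ lies in at least $k$ sets and only $k-1$ indices are taken), whereas you verify Hall's condition and cite Hall's theorem---your argument is a clean abstraction of the same pigeonhole, at the cost of invoking an external result where the paper's two-line greedy suffices.
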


\begin{proof}
The assertion follows by fixing, for each $g\in A^*_1\longp A^*_n$, a
representation $g=b_1\longp b_n$ with $b_i\in A^*_i$ for each $i\in[1,n]$,
and then recursively choosing indices $i_1,\ldots,i_n\in [1,n]$ so that,
having $i_1\longc i_{k-1}$ found, the next index $i_k$ is chosen to satisfy
$i_k\notin\{i_{1},\ldots,i_{k-1}\}$ and $b_k\in A_{i_k}$. The details are
straightforward.
\end{proof}

For a trio $(A,B,C)$, let $(A^*,B^*,C^*):=\tau(A,B,C)$. As a corollary of
Lemma~\refl{basic}, if $C^*\ne\est$, then $(A^*,B^*,C^*)$ is a trio, too.

We now consider the situation where one of the sets involved can be infinite.
Lemmas~\refl{stab} and~\refl{basic} do not in fact assume finiteness, while
Lemmas~\refl{major} and~\refl{stopcond} extend onto the infinite case as
follows.

\renewcommand{\theprimelemma}{\refl{major}$'$}
\begin{primelemma}\label{l:major-infi}
Suppose that $A_1\longc A_n$ are subsets of some ground set $G$ such that
\mbox{$G\stm A_1$}, $A_2\longc A_n$ are all finite, and let
 $(A^*_1\longc A^*_n):=\tau(A_1\longc A_n)$. Then also $G\stm A^*_1$,
$A^*_2\longc A^*_n$ are all finite, and
\begin{equation}\label{e:major-infi}
  -|G\stm A^*_1|+|A^*_2|\longp|A^*_k|
          \ge -|G\stm A_1|+|A_2|\longp|A_k| \quad\mbox{ for }\; k\in[1,n]
\end{equation}
with equality for $k=n$. If, indeed, equality holds in \refe{major-infi} for
each $k\in[1,n]$, then $A_1\supseteq\dotsb\supseteq A_n$ (whence $A^*_k=A_k$
for all $k\in[1,n]$).
\end{primelemma}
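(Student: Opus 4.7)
My plan is to mirror the proof of Lemma~\refl{major} with bookkeeping adapted to the single co-finite set $A_1$, using a double-count by multiplicities to keep all sums finite. First, $A^*_1=A_1\longu A_n\supseteq A_1$ is co-finite, and for $i\ge 2$ any $g\in A^*_i$ lies in at least two of the $A_j$ and hence in at least one of the finite sets $A_2\longc A_n$, so $A^*_i\seq A_2\longu A_n$ is finite; consequently $G\stm A^*_1\seq G\stm A_1$ is finite as well.

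Next I establish \refe{major-infi} with equality at $k=n$. For each $g\in G$, let $m(g)$ be the number of indices $j\in[1,n]$ with $g\in A_j$; by the defining property of the $n$-transform, this equals the number of indices $i$ with $g\in A^*_i$. A quick case check (on whether $g\in A_1$, respectively $g\in A^*_1$) shows that $g$ contributes exactly $m(g)-1$ both to $-|G\stm A_1|+|A_2|\longp|A_n|$ and to its starred analogue. Since $m(g)=1$ for every $g$ outside the finite set $F:=(G\stm A_1)\cup A_2\longu A_n$, both quantities equal the finite sum $\sum_{g\in F}(m(g)-1)$. For $k<n$ I replace $A_{k+1}\longc A_n$ by $\est$: this leaves the right-hand side of \refe{major-infi} unchanged while it can only shrink $A^*_1$ and each $A^*_i$ with $i\le k$, hence only decrease the left-hand side; applying the $k=n$ equality to the truncated sequence yields the required inequality.

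For the equality case, suppose \refe{major-infi} holds with equality for every $k\in[1,n]$. Comparing consecutive instances and the $k=n$ identity forces $|G\stm A^*_1|=|G\stm A_1|$ and $|A^*_i|=|A_i|$ for each $i\ge 2$. Combined with $G\stm A^*_1\seq G\stm A_1$, the first equation gives $A^*_1=A_1$, i.e.\ $A_j\seq A_1$ for every $j\ge 2$. A routine verification then shows $A^*_i=\tau_{i-1}(A_2\longc A_n)$ for $i\ge 2$ (no element of $G\stm A_1$ lies in two or more of the $A_j$), so the remaining identities $|A^*_i|=|A_i|$ are precisely the equality hypothesis of Lemma~\refl{major} applied to the finite sequence $(A_2\longc A_n)$; that lemma supplies $A_2\supseteq\dotsb\supseteq A_n$, completing the chain $A_1\supseteq A_2\supseteq\dotsb\supseteq A_n$.

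The main obstacle is ensuring the passage to an infinite ground set does not break the double-count; once the identity ``contribution of $g$ is $m(g)-1$'' is established and combined with $m(g)=1$ off the finite set $F$, the argument reduces cleanly to a finite sum and the remaining steps parallel Lemma~\refl{major}.
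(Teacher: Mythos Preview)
Your proof is correct. The paper takes a slightly more economical route: rather than re-running the multiplicity count directly, it introduces the finite auxiliary sets $U:=A_2\cup\cdots\cup A_n$ and $A_1':=A_1\cap U$, checks that $\tau(A_1',A_2,\ldots,A_n)=(U,A^*_2,\ldots,A^*_n)$ and that $|U|-|A_1'|=|G\setminus A_1|-|G\setminus A^*_1|$, and then simply invokes Lemma~\refl{major} for this finite system to obtain~\refe{major-infi} and its equality case in one stroke (using that $A_1'\supseteq A_2$ forces $A_1\supseteq A_2$). Your argument unpacks the same double-count but spells it out via the per-element contribution $m(g)-1$, which makes the $k=n$ identity transparent at the cost of treating the equality case separately by first pinning down $A^*_1=A_1$ and then reducing to Lemma~\refl{major} on $(A_2,\ldots,A_n)$. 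Either route works; the paper's substitution is shorter, yours is more self-contained.
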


\begin{proof}
The finiteness of $G\stm A^*_1$, $A^*_2\longc A^*_{n}$ is immediate. The
remaining assertions follow from Lemma~\refl{major} by considering the finite
sets $U:=A_2\longu A_n$ and $A_1':=A_1\cap U$, and observing that
  $$ \tau(A_1',A_2\longc A_n)=(U,A^*_2\longc A^*_n), $$
that
  $$ -|G\stm A^*_1|+|G\stm A_1| = |A^*_1\stm A_1|
                                      = |U\stm A_1'| = |U|-|A_1'|, $$
and that $A_1'\supseteq A_2$ implies $A_1\supseteq A_2$.
\end{proof}

\renewcommand{\theprimelemma}{\refl{stopcond}$'$}
\begin{primelemma}\label{l:stopcond-inf} 
For any subsets $A_1\longc A_n$ of an abelian group $G$ such that $G\stm
A_1$, $A_2\longc A_n$ are all finite, one of the following holds:
\begin{itemize}
\item[i)]  There exist elements $a_k\in A_k\ (k\in [1,n])$ such that,
    letting $(A^*_1\longc A^*_n):=\tau(A_1-a_1\longc A_n-a_n)$, we have
      $$ (-|G\stm A_1|,|A_2|\longc|A_n|)\prec(-|G\stm
                                        A^*_1|,|A^*_2|\longc|A^*_n|). $$
\item[ii)] We have $A_k-A_k\seq \pi(A_{k-1})$ for all $k\in[2,n]$; that
    is, for each $k\in[2,n]$, the set $A_k$ is contained in a coset of
    the period of the set $A_{k-1}$.
\end{itemize}
\end{primelemma}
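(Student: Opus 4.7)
The plan is to mimic the proof of Lemma~\refl{stopcond} word for word, simply substituting Lemma~\refl{major}$'$ for Lemma~\refl{major} at the one place the latter was invoked. The only thing worth checking is that the hypotheses of Lemma~\refl{major}$'$ survive the translations: for any $a_k\in A_k$, the translate $A_1-a_1$ is still co-finite with $|G\stm(A_1-a_1)|=|G\stm A_1|$, and each $A_k-a_k$ ($k\ge 2$) is still finite with the same cardinality as $A_k$, so Lemma~\refl{major}$'$ applies to the system $(A_1-a_1\longc A_n-a_n)$.

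First I would argue the contrapositive. Assume that (i) fails, i.e., for every choice of elements $a_k\in A_k$ ($k\in[1,n]$), writing
 $(A^*_1\longc A^*_n):=\tau(A_1-a_1\longc A_n-a_n)$, the majorization
  $(-|G\stm A_1|,|A_2|\longc|A_n|)\prec(-|G\stm A^*_1|,|A^*_2|\longc|A^*_n|)$
does \emph{not} hold. By Lemma~\refl{major}$'$, the partial-sum inequalities \refe{major-infi} always hold weakly, with equality at $k=n$; so failure of $\prec$ forces equality in \refe{major-infi} for every $k\in[1,n]$.

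Next, the second assertion of Lemma~\refl{major}$'$ then yields the nested chain
  $A_1-a_1\supseteq A_2-a_2\supseteq\dotsb\supseteq A_n-a_n$.
In particular, for each $k\in[2,n]$, $A_k-a_k\seq A_{k-1}-a_{k-1}$, i.e., $A_k-a_k+a_{k-1}\seq A_{k-1}$. Since this inclusion holds for \emph{arbitrary} $a_k\in A_k$ and $a_{k-1}\in A_{k-1}$, letting $a_k$ and $a_{k-1}$ range over $A_k$ and $A_{k-1}$ gives $A_k-A_k+A_{k-1}\seq A_{k-1}$, whence $A_k-A_k\seq\pi(A_{k-1})$. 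This is exactly alternative (ii).

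There is no real obstacle here; the only point that requires any attention is the appeal to Lemma~\refl{major}$'$, which was already set up to accommodate one co-finite set. Once that is invoked, the passage to (ii) is the same set-arithmetic step as in Lemma~\refl{stopcond}.
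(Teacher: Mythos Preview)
Your proof is correct and essentially identical to the paper's: both argue the contrapositive, invoke Lemma~\refl{major-infi} (Lemma~\refl{major}$'$) to force the nested chain $A_1-a_1\supseteq\dotsb\supseteq A_n-a_n$ for every choice of translates, and then let $a_{k-1},a_k$ range freely to obtain $A_k-A_k+A_{k-1}\seq A_{k-1}$. The paper additionally remarks that $A_k-A_k$ is symmetric to justify the final implication $A_k-A_k\seq\pi(A_{k-1})$ (relevant when $A_{k-1}=A_1$ is merely co-finite), but this is a minor gloss on a step you also take.
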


\begin{proof}
The proof is almost identical to that of Lemma~\refl{stopcond}, except that
we now apply Lemma~\refl{major-infi} instead of Lemma~\refl{major}.

By Lemma~\refl{major-infi}, if
 $(-|G\stm A_1|,|A_2|\longc|A_n|)\prec(-|G\stm A^*_1|,|A^*_2|\longc|A^*_n|)$
does \emph{not} hold for some specific choice of $a_k\in A_k$, then for each
$k\in[2,n]$, we have $A_k-a_k+a_{k-1}\seq A_{k-1}$. If now $a_k\in A_k$ and
$a_{k-1}\in A_{k-1}$ can be chosen arbitrarily, this leads to
$A_k-A_k+A_{k-1}\seq A_{k-1}$, meaning that $A_k-A_k\seq\pi(A_{k-1})$ as
$A_k-A_k$ is symmetric.
\end{proof}

Since the order $\prec$ implies the lexicographic order $\prec_\ell$, from
Lemma~\refl{stopcond-inf} we conclude that either there exist elements
$a_k\in A_k$ such that
 $(|G\stm A^*_1|,-|A^*_2|\longc-|A^*_n|)
                                    \prec_\ell(|G\stm A_1|,-|A_2|\longc-|A_n|)$,
where $(A^*_1\longc A^*_n)=\tau(A_1-a_1\longc A_n-a_n)$, or
$A_k-A_k\seq\pi(A_{k-1})$ holds for each $k\in[2,n]$.

\section{The Main Lemma and Overview of the Proof.}\label{s:mainlemma}

The following result is the central ingredient of the proof of
Theorem~\reft{Krest}.
\begin{mainlemma}
Let $(A,B,C)$ be an aperiodic, maximal, deficient trio in an abelian group
$G$ such that $A$ is co-finite. For a triple $(a,b,c)\in G^3$, let
$(A^*,B^*,C^*):=\tau(A-a,B-b,C-c)$ and suppose that there exists $(a,b,c)\in
G^3$ with $C^*\ne\est$ and $(A^*,B^*,C^*)\ne(A-a,B-b,C-c)$. With these
assumptions, choose $(a,b,c)\in G^3$, satisfying the conditions just
mentioned, for which $|G\stm(A^*+B^*+C^*)|$ is smallest possible, and let
$H:=\pi(A^*+B^*+C^*)$. Then
  $$ |(A^*+H)\stm A^*|+|(B^*+H)\stm B^*|+|(C^*+H)\stm C^*| \ge |H|-1. $$
\end{mainlemma}

We actually prove the Main Lemma and Theorem~\reft{Krest} simultaneously,
using induction, as we now proceed to describe.

Clearly, it suffices to prove Theorem~\reft{Krest} only for those trios
$(A,B,C)$ with $A$ co-finite. To every such trio, we associate the quadruple
  $$ \sig(A,B,C):=(|G|,|G\stm A|,-|B|,-|C|), $$
and we denote by $\mathfrak S$ the set of all quadruples that can arise this
way, ordered lexicographically. The proof of the Main Lemma and
Theorem~\reft{Krest} goes by induction on $\sig(A,B,C)$. The induction is
well-founded as $\mathfrak S$ does not contain infinite descending chains
(with respect to the lexicographic order). This follows by observing that,
for $|G\stm A|$ fixed, the set of possible values of $|B|$ and $|C|$ is
finite, because $A+B+C\ne G$ implies $\max\{|B|,|C|\}\le |B+C|\le|G\stm A|$.

As a part of our inductive argument, we now show that, loosely speaking, if
the Main Lemma is true for the trio $(A,B,C)$ with $A$ co-finite, and
Theorem~\reft{Krest} is true for all trios $(A',B',C')$ with $A'$ co-finite
and $\sig(A',B',C')<\sig(A,B,C)$, then Theorem~\reft{Krest} is also true for
the trio $(A,B,C)$.

\begin{proposition}\label{p:ML2K}
Let $(A,B,C)$ be a trio with $A$ co-finite and suppose that
\begin{itemize}
\item[i)] either the assumptions of the Main Lemma fail for $(A,B,C)$ or
    the assertion of the Main Lemma holds for $(A,B,C)$;
\item[ii)] the estimate $\del(A',B',C')\le|\pi(A'+B'+C')|$ holds for all
    trios $(A',B',C')$ with $A'$ co-finite and
    $\sig(A',B',C')<\sig(A,B,C)$.
\end{itemize}
Then $\del(A,B,C)\le|\pi(A+B+C)|$.
\end{proposition}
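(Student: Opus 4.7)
The plan is to prove the proposition by a sequence of structural reductions on $(A,B,C)$, at each stage either concluding the bound directly or invoking the induction hypothesis (ii) on a trio with strictly smaller $\sig$. If $(A,B,C)$ is not deficient, the bound is trivial. If it is not maximal, I would embed it in a maximal trio $(A',B',C')$ using Lemma~\refl{astrio}: this gives $A\seq A'$, $B\seq B'$, $C\seq C'$ with at least one strict inclusion, $\pi(A'+B'+C')\le\pi(A+B+C)$, and $\sig(A',B',C')<\sig(A,B,C)$, so (ii) applies and monotonicity of $\del$ under the embedding closes the step. If $(A,B,C)$ is maximal but $H:=\pi(A+B+C)\ne\{0\}$, then maximality gives $\pi(A)=\pi(B)=\pi(C)=H$, so $A,B,C$ are unions of $H$-cosets, and their quotients $(\oA,\oB,\oC)$ form an aperiodic trio in $G/H$ with strictly smaller $\sig$ (either $|G/H|<|G|$ in the finite case, or $|G/H\stm\oA|<|G\stm A|$ in the infinite case); (ii) then gives $\del(\oA,\oB,\oC)\le 1$, and multiplying by $|H|$ yields $\del(A,B,C)\le|H|$.

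It thus suffices to treat $(A,B,C)$ maximal, aperiodic, and deficient, where the target reduces to $\del(A,B,C)\le 1$. Here I split on condition~(i). If the hypotheses of the Main Lemma fail, then for each $(a,b,c)\in A\times B\times C$ one has $0\in C^*$, so by assumption the transform must be trivial: $A-a\supseteq B-b\supseteq C-c$ by Lemma~\refl{stab}. Letting $a$ and $b$ vary over $A$ and $B$, the inclusion $A-a\supseteq B-b$ yields $A+(B-b)\seq A$ for every $b\in B$, so $B-B\seq\pi(A)=\{0\}$ (using cofiniteness of $A$ together with maximal aperiodicity), forcing $|B|=1$; a parallel argument on $B-b\supseteq C-c$ then forces $|C|=1$. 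Consequently $\del(A,B,C)=2-|G\stm A|\le 1$, since $A\ne G$.

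If instead the conclusion of the Main Lemma holds, I take $(a,b,c)$, $(A^*,B^*,C^*)$, and $H:=\pi(A^*+B^*+C^*)$ as supplied. By Lemma~\refl{basic} and the cofiniteness of $A$, the triple $(A^*,B^*,C^*)$ is itself a trio with $A^*$ cofinite, and Lemma~\refl{major-infi} gives both $\del(A^*,B^*,C^*)=\del(A,B,C)$ and, since the transform is nontrivial, $\sig(A^*,B^*,C^*)<\sig(A,B,C)$. I then set $\tilde A:=A^*+H$, $\tilde B:=B^*+H$, $\tilde C:=C^*+H$, so that $\tilde A+\tilde B+\tilde C=A^*+B^*+C^*$ (already $H$-periodic) and $\pi(\tilde A+\tilde B+\tilde C)=H$. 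A direct count gives
$$ \del(\tilde A,\tilde B,\tilde C)=\del(A^*,B^*,C^*)+|\tilde A\stm A^*|+|\tilde B\stm B^*|+|\tilde C\stm C^*|, $$
and the Main Lemma's conclusion bounds the last three terms below by $|H|-1$. On the other hand $\sig(\tilde A,\tilde B,\tilde C)\le\sig(A^*,B^*,C^*)<\sig(A,B,C)$, so (ii) yields $\del(\tilde A,\tilde B,\tilde C)\le|H|$. Combining the two estimates gives $\del(A,B,C)+|H|-1\le|H|$, i.e.\ $\del(A,B,C)\le 1$.

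The central challenge will be the numerical balance in this last case: the Main Lemma's $|H|-1$ lower bound must exactly compensate for the $|H|$ budget that induction affords on the $H$-periodic closure $(\tilde A,\tilde B,\tilde C)$, leaving a slack of exactly one. Everything else --- the embedding into a maximal trio, the quotient by $\pi(A+B+C)$, the singleton conclusion $|B|=|C|=1$ when the Main Lemma's hypotheses fail, and the verification that $(A^*,B^*,C^*)$ is a trio with strictly smaller $\sig$ --- is routine bookkeeping via the lemmas already in place.
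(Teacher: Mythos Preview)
Your proof is correct and follows essentially the same route as the paper's. The one noteworthy variation is in the final step: where the paper passes to the quotient $G/H$ and applies the induction hypothesis to $(\phi_H(A^*),\phi_H(B^*),\phi_H(C^*))$ (splitting into the cases $H=\{0\}$ and $H\ne\{0\}$), you instead apply the induction hypothesis directly to $(\tilde A,\tilde B,\tilde C)=(A^*+H,B^*+H,C^*+H)$ inside $G$, using $\sig(\tilde A,\tilde B,\tilde C)\le\sig(A^*,B^*,C^*)<\sig(A,B,C)$; this is a mild streamlining that avoids the case split and the passage to the quotient group, but is otherwise the same computation.
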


\begin{proof}
If $(A,B,C)$ is not deficient, then the required estimate
$\del(A,B,C)\le|\pi(A+B+C)|$ is immediate; suppose therefore that $(A,B,C)$
is deficient.

If $(A,B,C)$ is not maximal, then we consider a maximal trio $(A',B',C')$
containing $(A,B,C)$ and satisfying $\pi(A'+B'+C')\le\pi(A+B+C)$,
cf.~Lemma~\refl{astrio}. Since $(A',B',C')$ strictly contains $(A,B,C)$, it
follows that $\sig(A',B',C')<\sig(A,B,C)$ and
$\delta(A,B,C)<\delta(A',B',C')$, and in view of assumption ii), we then
get
  $$ \del(A,B,C) < \del(A',B',C') \le |\pi(A'+B'+C')| \le |\pi(A+B+C)|. $$
Suppose thus that $(A,B,C)$ is maximal.

Let $G$ denote the underlying group, and for a subgroup $K\le G$ denote by
$\phi_K$ the canonical homomorphism from $G$ onto the quotient group $G/K$.

If $(A,B,C)$ is periodic, then, denoting its period by $K$, we get
  $$ \del(A,B,C)\le\del(A+K,B+K,C+K)
                     = |K|\,\del(\phi_K(A),\phi_K(B),\phi_K(C)) \le |K|, $$
with the last inequality following from the assumption ii) in view of
$|G/K|\le|G|$ and
  $$ |\phi_K(G)\stm\phi_K(A)|
                       = |G\stm(A+K)|/|K| \le |G\stm A|/|K| < |G\stm A|. $$
We therefore suppose that $(A,B,C)$ is aperiodic.

If there do not exist $a,b,c\in G$ such that, letting
$(A^*,B^*,C^*):=\tau(A-b,B-b,C-c)$, we have $C^*\ne\est$ and
$(A^*,B^*,C^*)\ne(A-a,B-b,C-c)$, then by Lemma \ref{l:stopcond-inf}, the set
$C$ is contained in a coset of $\pi(B)$, and the set $B$ is contained in a
coset of $\pi(A)$. Since $(A,B,C)$ is aperiodic, this yields $|B|=|C|=1$,
whence
  $$ \del(A,B,C) = -|G\stm A|+|B|+|C| = 2 -|G\stm A|
                                                  \le 1 \le |\pi(A+B+C)|. $$

We thus assume that the trio $(A,B,C)$ satisfies all the assumptions of the
Main Lemma: namely, it is maximal, aperiodic, and deficient, and there exists
$(a,b,c)\in G^3$ such that, letting $(A^*,B^*,C^*):=\tau(A-a,B-b,C-c)$, we
have $C^*\ne\est$ and $(A^*,B^*,C^*)\ne(A-a,B-b,C-c)$. Moreover, we assume
that the triple $(a,b,c)$ is chosen to minimize $|G\stm(A^*+B^*+C^*)|$.
Notice that the condition $(A^*,B^*,C^*)\ne(A-a,B-b,C-c)$ implies
$\sig(A^*,B^*,C^*)<\sig(A-a,B-b,C-c)=\sig(A,B,C)$ by Lemma~\refl{major-infi}.

Let $H:=\pi(A^*+B^*+C^*)$. If $H=\{0\}$, then, by Lemma~\refl{major-infi} and
assumption ii), we have
  $$ \del(A,B,C) = \del(A^*,B^*,C^*) \le |H| = 1 \le |\pi(A+B+C)|. $$
Suppose therefore that $H\ne\{0\}$. In view of $|G/H|\le|G|$ and
\begin{multline}\label{e:indfactor}
  \qquad |\phi_H(G)\stm\phi_H(A^*)| \le |\phi_H(G)\stm\phi_H(A)| \\
                    = |G\stm(A+H)|/|H| \le |G\stm A|/|H| < |G\stm A|, \qquad
\end{multline}
we can apply assumption ii) to the aperiodic trio
$(\phi_H(A^*),\phi_H(B^*),\phi_H(C^*))$ to obtain
\begin{equation}\label{e:factor}
  \del(A^*+H,B^*+H,C^*+H)
                = |H|\, \del(\phi_H(A^*),\phi_H(B^*),\phi_H(C^*)) \le |H|.
\end{equation}
On the other hand, from the Main Lemma,
\begin{equation}\label{e:fromML}
  \del(A^*+H,B^*+H,C^*+H)-\del(A^*,B^*,C^*)\ge |H|-1.
\end{equation}
Comparing \refe{factor} and \refe{fromML} and using Lemma~\refl{major-infi},
we obtain
  $$ \del(A,B,C) = \del(A^*,B^*,C^*) \le 1 \le |\pi(A+B+C)|. $$
\end{proof}

\section{Proof of the Main Lemma and Theorem~\reft{Krest}}\label{s:proof}

\subsection{The set-up and initial observations.}\label{s:setup}
As follows from Proposition~\refp{ML2K}, to establish the Main Lemma and
Theorem~\reft{Krest}, it suffices to prove the former assuming, as an
induction hypothesis, that the latter is true for all ``smaller'' trios.
Having the components of the trio under consideration appropriately
translated, we thus have the following set of assumptions:
\begin{itemize}
\item[i)] $(A,B,C)$ is an aperiodic, maximal, deficient trio in an
    abelian group $G$, with $A$ co-finite.
\item[ii)] the triple $(A^*,B^*,C^*):=\tau(A,B,C)$ satisfies
    $C^*\ne\est$ and $(A^*,B^*,C^*)\ne(A,B,C)$; thus, $(A^*,B^*,C^*)$
    is a trio with $\del(A^*,B^*,C^*)=\del(A,B,C)$ and
    $\sig(A^*,B^*,C^*)<\sig(A,B,C)$ (by Lemmas~\refl{major-infi}
    and~\refl{stopcond-inf}).
\item[iii)] for any $a,b,c\in G$, letting
    $(U^*,V^*,W^*):=\tau(A-a,B-b,C-c)$, we have either $W^*=\est$ or
    $(U^*,V^*,W^*)=(A-a,B-b,C-c)$ or
    $|G\stm(U^*+V^*+W^*)|\ge|G\stm(A^*+B^*+C^*)|$;
\item[iv)] for any trio $(A',B',C')$ with $A'$ co-finite and
    $\sig(A',B',C')<\sig(A,B,C)$, we have
    $\del(A',B',C')\le|\pi(A'+B'+C')|$.
\end{itemize}
We let $H:=\pi(A^*+B^*+C^*)$, and we want to prove that
\begin{equation}\label{e:main2prove}
  |(A^*+H)\stm A^*|+|(B^*+H)\stm B^*|+|(C^*+H)\stm C^*| \ge |H|-1.
\end{equation}
Denote the left-hand side of \refe{main2prove} by $\rho$ and, for a
contradiction, assume that
\begin{equation}\label{e:rho}
  \rho\le |H|-2;
\end{equation}
notice that this implies $|H|\ge 2$.

For an element $x\in G$ and a set $S\seq G$, let $S_x:=S\cap(x+H)$ be the
\emph{$x$-slice of $S$}; thus, if $x\equiv y\pmod H$, then $S_x=S_y$. From
now on, we will write $\o S:=(S+H)\stm S$ for the $H$-complement of $S$.
(Although this is inconsistent with the notation of Section~\refs{trios}, no
confusion should arise as the ``old notation'' will not be used anymore.)
Thus, for instance, $\o{S_x}$ is the complement of $S$ in $x+H$, except that,
if $S$ does not have any elements in this coset, then $\o{S_x}$ is empty:
  $$ \o{S_x} = \begin{cases}
                 (x+H)\stm S &\ \text{if $S_x\ne\est$}, \\
                 \est        &\ \text{if $S_x=\est$};
               \end{cases} $$
as a result, $|\o{S_x}|=t|H|-|S_x|$, where $t=1$ if $S_x\ne\est$, and
$t=0$ otherwise.

Observing that $(A^*_x,B^*_x,C^*_x)=\tau(A_x,B_x,C_x)$, we get
$|A^*_x|+|B^*_x|+|C^*_x|=|A_x|+|B_x|+|C_x|$ by Lemma~\refl{major}.
Consequently, we have
  $$ \rho = \sum_x \rho_x, $$
where $x$ runs over the representatives of all cosets of $H$, and
\begin{align}
  \rho_x &= |(A^*_x+H)\stm A^*|+|(B^*_x+H)\stm B^*|+|(C^*_x+H)\stm C^*|
                                                               \nonumber \\
         &= |\o{A_x^*}| + |\o{B_x^*}| + |\o{C_x^*}| \nonumber \\
         &= t_x|H|-|A_x^*|-|B_x^*|-|C_x^*| \nonumber \\
         &= t_x|H|-|A_x|-|B_x|-|C_x|, \label{e:tx1}
\end{align}
$t_x\in[0,3]$ being the number of non-empty slices among $A_x^*$, $B_x^*$,
and $C_x^*$. In particular, if $A^*_x\ne\est$ (meaning that at least one of
$A_x,B_x$, and $C_x$ is non-empty), then
\begin{equation}\label{e:tx2}
   \rho_x \ge |\o{A^*_x}| \ge |H|-|A_x|-|B_x|-|C_x|,
\end{equation}
and if $C^*_x$ is non-empty (so that also $A^*_x$ and $B^*_x$ are non-empty),
then
\begin{equation}\label{e:tx3}
  \rho_x \ge 3|H|-|A_x|-|B_x|-|C_x| = |\o{A_x}| + |\o{B_x}| + |\o{C_x}|.
\end{equation}

We say that a subset $S$ of an $H$-coset is \emph{partial} if $0<|S|<|H|$,
and is \emph{full} if $|S|=|H|$.

Since $(A,B,C)$ is maximal and aperiodic, there is a unique element of $G$
not lying in $A+B+C$ (see a remark before the statement of Lemma~\refl{gf});
we denote this element by $g_0$, so that $A+B+C=G\stm\{g_0\}$. Notice that,
for every $x\in G$ with $A_x$ partial, there exist $y,z\in G$ with both $B_y$
and $C_z$ partial and $x+y+z\equiv g_0\pmod H$. Indeed, otherwise, for any $y$
and $z$ with $x+y+z\equiv g_0\pmod H$, we would have either $B_y=\est$ or
$C_z=\est$; this would lead to $g_0\notin (x+H)+B+C$ and consequently
$(A\cup(x+H),B,C)$ would also be a trio, contradicting the maximality of
$(A,B,C)$. Similar remarks apply to the situation where $B_y$ or $C_z$ is
partial for some $y,z\in G$. This observation will be used repeatedly in the
proof.

Recall that, for a subgroup $K\le G$, by $\phi_K$ we denote the canonical
homomorphism from $G$ onto $G/K$.

\begin{lemma}\label{l:emptystar}
\hskip 0in 
\begin{itemize}
\item[i)] If $A^*_x=\est$ for some $x\in G$, then there exist slices
    $B^*_y,\,C^*_z\ne\est$ with  $x+y+z\equiv g_0\pmod H$.
\item[ii)]  If $B^*_y=\est$ for some $y\in G$, then there exist slices
    $A^*_x,\,C^*_z\ne\est$ with  $x+y+z\equiv g_0\pmod H$.
\item[iii)]  If $C^*_z=\est$ for some $z\in G$, then there exist slices
    $A^*_x,\,B^*_y\ne\est$ with  $x+y+z\equiv g_0\pmod H$.
\end{itemize}
\end{lemma}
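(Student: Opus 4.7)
The plan is to establish all three parts of Lemma \refl{emptystar} simultaneously by proving that the projected triple
\[ (\widetilde A,\,\widetilde B,\,\widetilde C) := (\phi_H(A^*),\,\phi_H(B^*),\,\phi_H(C^*)) \]
is a \emph{maximal} $\phi_H(g_0)$-trio in $G/H$. Once maximality is in hand, each part follows at once from the characterization of maximal $g$-trios stated after Lemma \refl{khinchin}: such a trio satisfies $\widetilde A = \phi_H(g_0) - \bigl((G/H) \stm (\widetilde B + \widetilde C)\bigr)$ together with the two cyclic analogues.

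First I carry out routine checks that $(\widetilde A, \widetilde B, \widetilde C)$ is an aperiodic $\phi_H(g_0)$-trio: $\widetilde A$ is co-finite in $G/H$ (since $A \seq A^*$ and $A$ is co-finite); $\widetilde B$ and $\widetilde C$ are finite and non-empty (because $C^* \ne \est$ by setup assumption ii)); the sumset $\widetilde A + \widetilde B + \widetilde C = \phi_H(A^*+B^*+C^*)$ misses $\phi_H(g_0)$ (since $H$ stabilizes $A^*+B^*+C^*$ and $g_0$ lies outside it); and aperiodicity holds because $H$ is the full stabilizer of $A^*+B^*+C^*$ in $G$. Next, I verify the projected trio is deficient: the bounds $|A^*| \le |H|\cdot|\widetilde A|$, $|B^*| \le |H|\cdot|\widetilde B|$, $|C^*| \le |H|\cdot|\widetilde C|$, and $|G \stm A^*| \ge |H|\cdot|(G/H)\stm\widetilde A|$ together yield $\del(\widetilde A, \widetilde B, \widetilde C) \ge \del(A^*,B^*,C^*)/|H| = \del(A,B,C)/|H| > 0$, and integrality of deficiency forces $\del(\widetilde A, \widetilde B, \widetilde C) \ge 1$.

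The heart of the proof is the maximality of $(\widetilde A, \widetilde B, \widetilde C)$ in $G/H$; I argue by contradiction. Embed it in a maximal $\phi_H(g_0)$-trio $(A', B', C')$ via Lemma \refl{astrio}; part (iii) of that lemma gives $\pi(A'+B'+C') \le \pi(\widetilde A + \widetilde B + \widetilde C) = \{\bar 0\}$, so $(A', B', C')$ is aperiodic. Since $|H| \ge 2$ in the current setup we have $|G/H| < |G|$, and hence $\sig(A', B', C') < \sig(A, B, C)$, so by the inductive hypothesis (setup assumption iv)) $\del(A', B', C') \le 1$. On the other hand, a strict enlargement in any single coordinate raises $\del$ by at least $1$, so $\del(A', B', C') \ge \del(\widetilde A, \widetilde B, \widetilde C) + 1 \ge 2$, a contradiction.

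With maximality established, part (i) follows at once: $A^*_x = \est$ amounts to $\phi_H(x) \notin \widetilde A$, and the first maximal-trio identity gives $\phi_H(g_0) - \phi_H(x) \in \widetilde B + \widetilde C$, producing cosets $y, z$ with $B^*_y, C^*_z \ne \est$ and $x + y + z \equiv g_0 \pmod H$. Parts (ii) and (iii) drop out of the other two maximal-trio identities in the same way. The principal obstacle in this plan is the maximality step, which hinges on the inductive hypothesis combined with the increment-by-one behavior of deficiency under strict enlargement.
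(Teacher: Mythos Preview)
Your approach is essentially the same as the paper's: both establish that the projected trio $(\phi_H(A^*),\phi_H(B^*),\phi_H(C^*))$ is a \emph{maximal} aperiodic trio in $G/H$, by embedding it (were it not maximal) into a strictly larger aperiodic trio, applying the induction hypothesis to bound the larger trio's deficiency by $1$, and deriving a contradiction with the deficiency of $(A,B,C)$. The deductions of parts (i)--(iii) from maximality are likewise the same.

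There is, however, one genuine slip in your justification of the induction step. You write ``since $|H|\ge 2$ in the current setup we have $|G/H|<|G|$, and hence $\sig(A',B',C')<\sig(A,B,C)$''. This inequality on cardinalities is \emph{false} when $G$ is infinite (and the setup explicitly allows this, requiring only that $A$ be co-finite): for infinite $G$ and finite $H$ one has $|G/H|=|G|$. The paper handles this correctly by invoking~\refe{indfactor}: one has $|G/H|\le|G|$ always, and
\[
  |(G/H)\stm A'| \;\le\; |(G/H)\stm\phi_H(A^*)| \;\le\; |G\stm(A+H)|/|H| \;<\; |G\stm A|,
\]
the last strict inequality using $|H|\ge 2$ together with $G\stm A\ne\est$ (which holds because $(A,B,C)$ is a trio). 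Thus the \emph{second} coordinate of $\sig$ strictly drops, and $\sig(A',B',C')<\sig(A,B,C)$ follows lexicographically. With this correction your argument goes through.
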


\begin{proof}
We prove the first assertion only; the other two follow in an identical way.

The key observation is that the aperiodic trio
$(\phi_H(A^*),\phi_H(B^*),\phi_H(C^*))$ is maximal: otherwise by
Lemma~\refl{astrio} it would be properly contained in an aperiodic maximal
trio $(U,V,W)$ to which the induction hypothesis applies in view of
$|\phi_H(G)\stm U|\le|\phi_H(G)\stm\phi_H(A^*)|$ and~\refe{indfactor}. This
would lead to
  $$ \del(\phi_H(A^*),\phi_H(B^*),\phi_H(C^*))
                                      < \del(U,V,W) \le |\pi(U+V+W)| = 1, $$
and thus to
  $$ \del(A,B,C) = \del(A^*,B^*,C^*)
                      = H\del(\phi_H(A^*),\phi_H(B^*),\phi_H(C^*)) \le 0, $$
contrary to the deficiency assumption. Now, the maximality of
$(\phi_H(A^*),\phi_H(B^*),\phi_H(C^*))$ shows that $(A^*\cup(x+H),B^*,C^*)$
is \emph{not} a trio, which readily implies the assertion.
\end{proof}

\begin{lemma}\label{l:box}
Let $(A_x,B_y,C_z)$ be a triple of slices with $x+y+z\equiv g_0\pmod H$.
\begin{itemize}
\item[i)]   If $A_x\ne\est$, then $|B_y|+|C_z|\le|H|$;
\item[ii)]  if $B_y\ne\est$, then $|C_z|+|A_x|\le|H|$;
\item[iii)] if $C_z\ne\est$, then $|A_x|+|B_y|\le|H|$.
\end{itemize}
\end{lemma}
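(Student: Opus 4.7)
The plan is to use the fact, established just before the lemma, that a maximal aperiodic trio $(A,B,C)$ satisfies $A+B+C=G\stm\{g_0\}$, combined with a simple pigeonhole argument inside a single $H$-coset. By symmetry it suffices to prove part (iii); parts (i) and (ii) are obtained by cyclically permuting the roles of $A$, $B$, and $C$.

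For (iii), assume $C_z\ne\est$ and fix some $c\in C_z$. Then on the one hand
$A_x+B_y+c\seq A+B+C=G\stm\{g_0\}$,
and on the other hand, since every $a\in A_x$ lies in $x+H$ and every $b\in B_y$ lies in $y+H$, while $c\in z+H$ and $x+y+z\equiv g_0\pmod H$, we get
$A_x+B_y+c\seq g_0+H$.
Combining these two containments,
$$ A_x+B_y+c \seq (g_0+H)\stm\{g_0\}, $$
a set of size $|H|-1$.

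Now suppose, for contradiction, that $|A_x|+|B_y|>|H|$. Translating into the subgroup $H$ itself, set $A':=A_x-x$ and $B':=B_y-y$, both subsets of $H$ with $|A'|+|B'|>|H|$. For every $h\in H$, the two subsets $A'$ and $h-B'$ of $H$ have sizes summing to more than $|H|$, and hence must intersect; this means $h\in A'+B'$. Therefore $A'+B'=H$, that is, $A_x+B_y=(x+y)+H$. Using $c-z\in H$, we then obtain
$$ A_x+B_y+c = (x+y+c)+H = g_0+(c-z)+H = g_0+H, $$
which contradicts $A_x+B_y+c\seq(g_0+H)\stm\{g_0\}$. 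Hence $|A_x|+|B_y|\le|H|$, establishing (iii).

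There is no real obstacle here beyond correctly identifying the ingredients: the key is that the maximality and aperiodicity of $(A,B,C)$ pins $A+B+C$ down to the coset complement of a single point, reducing the claim inside one coset to the elementary pigeonhole fact that two subsets of $H$ with sizes summing to more than $|H|$ have full sumset $H$. The congruence $x+y+z\equiv g_0\pmod H$ is exactly what aligns the translated slice sum with $g_0+H$ so that the contradiction arises.
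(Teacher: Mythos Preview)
Your proof is correct and follows essentially the same approach as the paper's: both argue by pigeonhole that if the two relevant slices had sizes summing to more than $|H|$, their sumset would fill an entire $H$-coset, forcing $g_0\in A+B+C$. The paper proves part (i) and invokes symmetry for the rest, while you prove part (iii) with somewhat more detail; the only inessential difference is that you appeal to the full equality $A+B+C=G\stm\{g_0\}$ where the weaker fact $g_0\notin A+B+C$ already suffices.
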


\begin{proof}
By the pigeonhole   principle, from $|B_y|+|C_z|>|H|$ we would get
$B_y+C_z=y+z+H$. If $A_x\ne\est$, then this implies $g_0\in
x+y+z+H=A_x+B_y+C_z\seq A+B+C$, contrary to the choice of $g_0$. This proves
i), and in the same way one obtains ii) and iii).
\end{proof}

\subsection{Recovering the structure.}

We prove the Main Lemma in a series of claims sharing all the assumptions and
notation of Section~\refs{setup}.

\begin{alphaclaim}\label{m:A}
Suppose that $x,y,z\in G$ satisfy $x+y+z\equiv g_0\pmod H$ and
$C_x^*\ne\est$. Then for any permutation $(U,V,W)$ of the trio $(A,B,C)$ such
that $V_y,W_z\ne\est$, we also have $V_z,W_y\ne\est$, while
$U_y=U_z=B_y^*=B_z^*=\est$. In addition,
\begin{align*}
  |\o{A^*_y}|+|\o{A^*_z}|+2|\o{U_x}| &\ge |H|
\intertext{and}
  |\o{A^*_y}|+|\o{A^*_z}|+4|\o{U_x}| &\ge 2|H|.
\end{align*}
\end{alphaclaim}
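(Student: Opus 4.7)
The plan is to prove the claim in three stages: first the disjointness $B_y^*=B_z^*=\est$, then the emptiness/nonemptiness pattern for the slices of $U,V,W$ at $y$ and $z$, and finally the two numerical inequalities.

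For $B_y^*=B_z^*=\est$, I would pass to the quotient $G/H$. The proof of Lemma~\refl{emptystar} shows that $(\phi_H(A^*),\phi_H(B^*),\phi_H(C^*))$ is a maximal aperiodic trio in $G/H$, whose triple sumset is $\phi_H(G)\setminus\{\phi_H(g_0)\}$. From $C_x^*\ne\est$ I get $\phi_H(x)\in\phi_H(C^*)$, and from $V_y,W_z\ne\est$ I get $\phi_H(y),\phi_H(z)\in\phi_H(A^*)$. Were $\phi_H(y)\in\phi_H(B^*)$, then the assignment $\phi_H(z)\mapsto\phi_H(A^*)$, $\phi_H(y)\mapsto\phi_H(B^*)$, $\phi_H(x)\mapsto\phi_H(C^*)$ would express $\phi_H(g_0)=\phi_H(x)+\phi_H(y)+\phi_H(z)$ as a sum from $\phi_H(A^*)+\phi_H(B^*)+\phi_H(C^*)$, contradicting aperiodic maximality. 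The symmetric argument rules out $\phi_H(z)\in\phi_H(B^*)$, giving $B_y^*=B_z^*=\est$ and hence the pairwise disjointness of the three slices within each of $y+H$ and $z+H$.

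For the structural assertions $V_z,W_y\ne\est$ and $U_y=U_z=\est$ I would fix $(V,W,U)=(A,B,C)$ (the other permutations being symmetric) and argue by contradiction. The main tool beyond Lemma~\refl{box} is the full maximality of the original trio $(A,B,C)$: the identity $A=g_0-\o{B+C}$ and its cyclic analogues upgrade the pair bound of Lemma~\refl{box} to union bounds such as $|C_x|+|(A_y+B_z)\cup(A_z+B_y)|\le|H|$ inside the coset $x+z+H$, with analogues for $|A_x|+\cdots$ in $y+z+H$ and $|B_x|+\cdots$ in $x+y+H$. Assuming, say, $C_y\ne\est$, I would combine these union bounds with the requirement $(A+B+C)\cap(g_0+H)=(g_0+H)\setminus\{g_0\}$---which forces the contributing sumsets $A_y+B_z+C_x$, $A_x+B_z+C_y$, and their siblings to cover all $|H|-1$ points of $(g_0+H)\setminus\{g_0\}$ very tightly---to extract a triple $(a,b,c)\in G^3$ for which, writing $(U^*,V^*,W^*):=\tau(A-a,B-b,C-c)$, the triple $(U^*,V^*,W^*)$ has $W^*\ne\est$, is non-trivial, and satisfies $|G\stm(U^*+V^*+W^*)|<|H|$, contradicting assumption iii) of Section~\refs{setup}. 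The cases $C_z\ne\est$, $A_z=\est$, and $B_y=\est$ follow symmetrically.

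Finally, the two numerical inequalities reduce to summing instances of Lemma~\refl{box}. From $C_x\ne\est$ I get $|A_y|+|B_z|\le|H|$ and $|A_z|+|B_y|\le|H|$; from $B_z,B_y,A_z,A_y\ne\est$ respectively I get $|A_y|+|C_x|,\,|A_z|+|C_x|,\,|B_y|+|C_x|,\,|B_z|+|C_x|\le|H|$. Summing all six bounds yields $|A_y|+|B_y|+|A_z|+|B_z|+2|C_x|\le 3|H|$, which via $|\o{A^*_y}|=|H|-|A_y|-|B_y|$ (using pairwise disjointness and $C_y=\est$), the analogous identity at $z$, and $|\o{U_x}|=|H|-|C_x|$ becomes the first desired inequality. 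Summing only the four $|C_x|$-bounds gives $|A_y|+|B_y|+|A_z|+|B_z|+4|C_x|\le 4|H|$, which similarly becomes the second. The hard part is the middle stage: Lemma~\refl{box} only bounds \emph{sizes} of pair sums, while we need \emph{emptiness} of specific slices, and pinpointing the offending translation that witnesses the violation of assumption iii) requires a careful case analysis covering each way the structural conclusion could fail.
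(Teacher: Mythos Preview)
Your stages 1 and 3 are sound. The quotient argument for $B_y^*=B_z^*=\est$ is correct and in fact cleaner than the paper's route: since $g_0+H$ is disjoint from the $H$-periodic set $A^*+B^*+C^*$, the element $\phi_H(g_0)$ lies outside $\phi_H(A^*)+\phi_H(B^*)+\phi_H(C^*)$, and your contradiction follows at once (you do not even need maximality of the quotient trio for this step). As a bonus, $B_y^*=\est$ together with $C_x^*\ne\est$ already forces $x\not\equiv y\pmod H$, which the paper proves separately. Your stage~3 is essentially the paper's computation, just summing a slightly different selection of Lemma~\refl{box} instances.

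Stage 2 is a genuine gap. You have not proved $V_z,W_y\ne\est$ and $U_y=U_z=\est$; you have sketched a plan to violate assumption iii) by exhibiting a translation $(a,b,c)$ with $|G\stm(U^*+V^*+W^*)|<|G\stm(A^*+B^*+C^*)|$, and you concede this ``requires a careful case analysis'' that you do not perform. I do not see how your ingredients pin down such a translation. The union bounds from maximality, e.g.\ $|C_x|+|(A_y+B_z)\cup(A_z+B_y)|\le|H|$, are correct, but they and the covering condition on $(g_0+H)\stm\{g_0\}$ are size constraints; they do not by themselves produce elements $a,b,c$ for which the $3$-transform of the translates gains a new coset in its sumset. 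Nothing you wrote prevents every candidate translation from either collapsing ($W^*=\est$), trivializing ($C-c\seq B-b\seq A-a$), or keeping the complement at least as large.

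The paper's approach to this middle stage is completely different and much lighter: it never uses assumption iii). Every sub-case is handled by bounding $\rho$ from below via~\refe{tx1}--\refe{tx3} and Lemma~\refl{box}, then contradicting~\refe{rho}. For instance, $U_z=V_z=\est$ is ruled out by
\[
  \rho \ge \rho_x+\rho_z \ge (3|H|-|U_x|-|V_x|-|W_x|)+(|H|-|W_z|) \ge |H|
\]
using $|U_x|+|W_z|\le|H|$; the case where both $U_z,V_z\ne\est$ is ruled out by $\rho\ge\rho_x+\rho_y$ together with three pair bounds; and $V_z=\est$ (given exactly one of $U_z,V_z$ is empty) by $\rho\ge\rho_x+\rho_z$ again. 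Each step is two lines. You should drop the minimality route here and argue with $\rho$-estimates; assumption iii) enters the paper only later, in Claims~\refm{C}, \refm{D}, and~\refm{F}.
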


\begin{proof}
If we had $x\equiv y\pmod H$, then \refe{tx3} would give
  $$ \rho \ge \rho_x \ge 3|H|-(|U_x|+|V_x|+|W_x|), $$
while $|U_x|+|V_x|=|U_x|+|V_y|\le|H|$ by Lemma~\refl{box} (as $W_z\ne\est$).
It would then follow that $\rho\ge |H|$, contradicting~\refe{rho}.

Switching the roles of $y$ and $z$ and of $V$ and $W$ in this argument, we
similarly rule out the situation where $x\equiv z\pmod H$. Thus, we actually
have $x\not\equiv y\pmod H$ and $x\not\equiv z\pmod H$.

If we had $U_z=V_z=\est$, then from \refe{tx3} and \refe{tx1} we would obtain
  $$ \rho_x \ge 3|H|-|U_x|-|V_x|-|W_x| \ge |H|-|U_x| $$
and
  $$ \rho_z \ge |H|-|U_z|-|V_z|-|W_z| = |H|-|W_z|, $$
while $|U_x|+|W_z|\le |H|$ by Lemma~\refl{box}; consequently,
$\rho\ge\rho_x+\rho_z \ge |H|$, contradicting~\refe{rho}. Thus, at least one
of $U_z$ and $V_z$ is non-empty.

If \emph{both} $U_z$ and $V_z$ were non-empty, then we would get a
contradiction from
  $$ |U_x|+|V_y|\le|H|,\quad |V_x|+|W_y|\le|H|,\quad |W_x|+|U_y|\le|H| $$
(by Lemma~\refl{box}) and
  $$ \rho \ge \rho_x+\rho_y
                  \ge (3|H|-|U_x|-|V_x|-|W_x|)+(|H|-|U_y|-|V_y|-|W_y|) $$
(by~\refe{tx3} and~\refe{tx1}). It follows that \emph{exactly one} of $U_z$
and $V_z$ is empty. Switching the roles of $y$ and $z$ and of $V$ and $W$, in
the very same way we conclude that exactly one of $U_y$ and $W_y$ is empty.

We now claim that, indeed, $V_z$ and $W_y$ are non-empty, while $U_y$ and
$U_z$ are empty, for if, say, we had $V_z=\est$, then from
  $$ |U_x|+|W_z|\le|H|,\quad |U_z|+|W_x|\le|H| $$
(Lemma~\refl{box}) we would get
  $$ \rho \ge \rho_x+\rho_z \ge (3|H|-|U_x|-|V_x|-|W_x|) + (|H|-|U_z|-|W_z|)
                                                   \ge 2|H|-|V_x| \ge |H|, $$
and in a similar way we get a contradiction with~\refe{rho} assuming that
$W_y=\est$.

We have thus shown that $V_z,W_y\ne\est$ and $U_y=U_z=\est$. Now, if we had
$B_y^*\ne\est$, then in view of $U_y=\est$ this would result in
  $$ \rho \ge \rho_x+\rho_y \ge (3|H|-|U_x|-|V_x|-|W_x|)+(2|H|-|V_y|-|W_y|) $$
which, in conjunction with $|U_x|+|W_y|\le|H|$ and
 $|V_x|+|W_x|+|V_y|\le 3|H|$, contradicts~\refe{rho}. In the same way we
obtain a contradiction assuming $B_z^*\ne\est$. Thus, $B_y^*=B_z^*=\est$.

Finally, since
  $$  |U_x|+|W_z|\le|H|,\quad |U_x|+|V_y|\le|H|,\quad |W_y|+|V_z|\le|H| $$
by Lemma~\refl{box}, it follows in view of \refe{tx2} that
\begin{multline*}
  \quad |\o{A^*_y}|+|\o{A^*_z}|+2|\o{U_x}| \\
     \ge (|H|-|V_y|-|W_y|) + (|H|-|V_z|-|W_z|) + 2(|H|-|U_x|) \ge |H|\quad
\end{multline*}
and similarly, from
  $$  |U_x|+|V_y|\le|H|,\quad |U_x|+|W_y|\le|H|
                        ,\quad |U_x|+|V_z|\le|H|,\quad |U_x|+|W_z|\le|H|, $$
we get
\begin{multline*}
  \quad |\o{A^*_y}|+|\o{A^*_z}|+4|\o{U_x}| \\
     \ge (|H|-|V_y|-|W_y|) + (|H|-|V_z|-|W_z|) + 4(|H|-|U_x|) \ge 2|H|. \quad
\end{multline*}
\end{proof}

\begin{alphaclaim}\label{m:B}
There is at most one coset $x+H$ such that $C^*_x$ is partial. Moreover, if
$C^*_x$ is partial, then  exactly one of $A_x$, $B_x$ and $C_x$ is partial
while the other two are full.
\end{alphaclaim}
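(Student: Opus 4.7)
The plan is to prove both parts by contradiction, in each case deriving $\rho\ge|H|$ and thus violating~\refe{rho}. The main tools are Claim~A, applied through the partial-slice/maximality observation made just before Lemma~\refl{emptystar}, together with the pointwise bounds~\refe{tx2}--\refe{tx3}.

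For the ``exactly one partial'' assertion, I first note that $C^*_x=A_x\cap B_x\cap C_x$ being partial forces each of $A_x,B_x,C_x$ to be non-empty, and hence at least one of them must itself be partial. Suppose for contradiction that \emph{two} of them, say $A_x$ and $B_x$, are partial. Applying the partial-slice observation to $A_x$ yields $y,z$ with $B_y,C_z$ partial and $x+y+z\equiv g_0\pmod{H}$, and invoking Claim~A with $(U,V,W)=(A,B,C)$ produces $A_y=A_z=\est$ along with
\[ |\o{A^*_y}|+|\o{A^*_z}|+2|\o{A_x}|\ge|H|. \]
The analogous argument starting from $B_x$ gives $y',z'$ with $A_{y'},C_{z'}$ partial, $B_{y'}=B_{z'}=\est$, and
\[ |\o{A^*_{y'}}|+|\o{A^*_{z'}}|+2|\o{B_x}|\ge|H|. \]
Contrasting $A_y=A_z=\est$ against $A_{y'},A_{z'}\ne\est$ forces the pairs $\{y,z\}$ and $\{y',z'\}$ to be disjoint modulo $H$, and both are disjoint from $\{x\}$. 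Using $\rho_x\ge|\o{A_x}|+|\o{B_x}|$ from~\refe{tx3} and $\rho_w\ge|\o{A^*_w}|$ from~\refe{tx2} at each auxiliary coset, summation of the two Claim~A bounds yields $\rho\ge 2|H|-|\o{A_x}|-|\o{B_x}|$; combined with the trivial $\rho\ge\rho_x\ge|\o{A_x}|+|\o{B_x}|$, this gives $\rho\ge|H|$, contradicting~\refe{rho}.

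For the ``at most one coset'' assertion, suppose $C^*_{x_1}$ and $C^*_{x_2}$ are both partial with $x_1\not\equiv x_2\pmod H$. By the just-proved second part, each $x_i$ has exactly one partial slot and two full ones, so $\rho_{x_i}=|\o{S_{x_i}}|$ for the partial slot $S\in\{A,B,C\}$. Applying Claim~A at both $x_1$ and $x_2$ produces two inequalities of the form $|\o{A^*_{y_i}}|+|\o{A^*_{z_i}}|+2|\o{S_{x_i}}|\ge|H|$, where the pairs $\{y_i,z_i\}$ are disjoint from $\{x_j\}$ for $j\ne i$ by the same emptiness-vs-non-emptiness reasoning. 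Summation and the trade-off $\max(s,\,2|H|-s)\ge|H|$ with $s=|\o{S_{x_1}}|+|\o{S_{x_2}}|$ again force $\rho\ge|H|$.

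The principal obstacle will be the case analysis of coincidences among the auxiliary cosets: $y\equiv z\pmod H$ within a single Claim~A application halves the number of summands but simultaneously collapses the bound to $2|\o{A^*_y}|+2|\o{A_x}|\ge|H|$, so the overall arithmetic still closes; cross-application coincidences such as $y_1\equiv y_2$ or $y\equiv y'$ require the two bounds to be combined rather than simply added. The emptiness conclusions of Claim~A rule out many overlaps outright, leaving a finite collection of patterns, each of which must be checked to still yield $\rho\ge|H|$. A further sub-case where all three of $A_x,B_x,C_x$ are partial is dispatched by a third Claim~A application with permutation $(C,A,B)$ and the same summation technique.
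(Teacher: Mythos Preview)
Your overall strategy---apply Claim~\refm{A} at each partial site, read off the emptiness information and the inequality, and sum to force $\rho\ge|H|$---is exactly what the paper does. But you are missing the device that makes the paper's argument close without a case explosion, and your sketch does not actually close in all the sub-cases you defer.

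The paper avoids the coincidence analysis entirely by using \emph{only one} auxiliary coset per application: from $\rho_y,\rho_z\ge 0$ it takes $\max\{\rho_y,\rho_z\}\ge\tfrac12(|\o{A^*_y}|+|\o{A^*_z}|)$ and pairs this with the \emph{second} inequality of Claim~\refm{A}, namely $|\o{A^*_y}|+|\o{A^*_z}|+4|\o{U_x}|\ge 2|H|$. For the first part (two partial $C^*$-slices at $x$ and $\xi$) this gives, after a WLOG $|\o{U'_\xi}|\ge|\o{U_x}|$,
\[
  \rho\ge\rho_x+\rho_\xi+\max\{\rho_y,\rho_z\}
       \ge 2|\o{U_x}|+\tfrac12\bigl(|\o{A^*_y}|+|\o{A^*_z}|\bigr)\ge|H|,
\]
with no need to know whether $y\equiv z$ or how $\{y,z\}$ sits relative to $\{\eta,\zeta\}$. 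The second part is handled analogously (and the paper proves the two parts in the opposite order from yours, though that is harmless).

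Your plan, by contrast, adds two first-inequality bounds and needs the four auxiliary cosets to contribute four separate $\rho_w$-terms. The disjointness you invoke between $\{y,z\}$ and $\{y',z'\}$ works when the two applications use \emph{different} permutations (your $A_y=\est$ versus $A_{y'}\ne\est$ argument), but in the ``at most one coset'' part nothing prevents the partial slot at $x_1$ and at $x_2$ from being the \emph{same} letter---say both $A_{x_1}$ and $A_{x_2}$ are partial---and then $A_{y_1}=A_{z_1}=A_{y_2}=A_{z_2}=\est$ gives you no separation. When auxiliary cosets coincide, your ``$\max(s,2|H|-s)\ge|H|$'' arithmetic no longer follows from what you have written: a shared $y_1=y_2$ lets you count $\rho_{y_1}$ only once while both Claim~\refm{A} bounds demand it, and the resulting inequality degrades. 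The same issue arises in your first part when $y\not\equiv z$ but $y'\equiv z'$ (or vice versa): tracing the sums gives only $\rho\ge\tfrac32|H|-|\o{A_x}|$ together with $\rho\ge|\o{A_x}|+|\o{B_x}|$, which does not force $\rho\ge|H|$.

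These gaps are repairable, but the repair \emph{is} the paper's trick: switch to the second Claim~\refm{A} inequality and replace sums over auxiliary cosets by a single $\max$.
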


\begin{proof}
Assume by contradiction that $C^*_x$ and $C^*_\xi$ are both partial with
$x\not\equiv\xi\pmod H$. Since $C^*_x$ is partial, all three slices $A_x$,
$B_x$ and $C_x$ are nonempty with at least one of them partial. Let $(U,V,W)$
be a permutation of $(A,B,C)$ such that $U_x$ is partial.  Likewise, all
three slices $A_\xi$, $B_\xi$ and $C_\xi$ are nonempty with at least one
partial. Let $(U',V',W')$ be a permutation of $(A,B,C)$ such that $U'_\xi$ is
partial.

Recalling the observation above Lemma \refl{emptystar}, let $(U_x,V_y,W_z)$
and $(U'_\xi,V'_\eta,W'_\zet)$ be triples of partial (in particular,
nonempty) slices with
\begin{equation}\label{e:cong}
  x+y+z \equiv \xi+\eta+\zet \equiv g_0 \pmod H.
\end{equation}
Without loss of generality, we assume that $|\o{U'_\xi}|\ge|\o{U_x}|$.

We have $B^*_x\supseteq C^*_x\ne\est$ and  $B^*_\xi\supseteq C^*_\xi\ne\est$
while $B^*_y=B^*_z=B^*_\eta=B^*_\zet=\est$ by Claim~\refm{A}, and it follows
that $x$ and $\xi$ are distinct modulo $H$ from each of $y,z,\eta,\zet$.
Consequently, by~\refe{tx3}, \refe{tx2}, and the second inequality in
Claim~\refm{A},
\begin{multline*}
   \rho \ge \rho_x+\rho_\xi+\max\{\rho_y,\rho_z\}
      \ge |\o{U_x}|+|\o{U'_\xi}|+\max\{|\o{A^*_y}|,|\o{A^*_z}|\} \\
           \ge 2|\o{U_x}|+\frac12\big(|\o{A^*_y}|+|\o{A^*_z}|\big) \ge |H|.
\end{multline*}
This contradicts~\refe{rho}, showing that there is at most one coset $x+H$
such that $C^*_x$ is partial.

To complete the proof, we now show that, if $C^*_x$ is partial, then exactly
one of $A_x$, $B_x$, and $C_x$ is partial; since $C^*_x\neq\est$ ensures that
all three slices $A_x$, $B_x$ and $C_x$ are nonempty, this will also show
that the other two slices are full. For a contradiction, suppose that
$(U,V,W)$ is a permutation of $(A,B,C)$ such that $U_x$ and $V_x$ are both
partial and find then $y,z,\eta,\zet$ satisfying
  $$ x+y+z \equiv x+\eta+\zet \equiv g_0 \pmod H $$
so that all the components of $(U_x,V_y,W_z)$ and $(V_x,W_\eta,U_\zet)$ are
partial. As above, from Claim~\refm{A} we derive that $x$ is distinct modulo
$H$ from each of $y,z,\eta,\zet$. Furthermore, by Claim~\refm{A}, the unique
empty slice in $(U_y,V_y,W_y)$ is $U_y$, the unique empty slice in
$(U_z,V_z,W_z)$ is $U_z$, the unique empty slice in $(V_\eta,W_\eta,U_\eta)$
is $V_\eta$, and the unique empty slice in $(V_\zet,W_\zet,U_\zet)$ is
$V_\zet$;
it follows that $y$ is distinct modulo $H$ from each of $\eta$ and $\zet$,
and similarly $z$ is distinct modulo $H$ from each of $\eta$ and $\zet$.
Also, from \refe{tx1} and Claim~\refm{A},
  $$ |\o{U_x}|+\max\{\rho_y,\rho_z\} \ge |\o{U_x}|
        + \frac12\big(|\o{A^*_y}|+|\o{A^*_z}|\big) \ge \frac12|H| $$
and
  $$ |\o{V_x}|+\max\{\rho_\eta,\rho_\zet\} \ge |\o{V_x}|
        + \frac12\big(|\o{A^*_\eta}|+|\o{A^*_\zet}|\big) \ge \frac12|H|. $$
Since $\rho_x\ge|\o{U_x}|+|\o{V_x}|$ by \refe{tx3}, we derive that
  $$ \rho \ge \rho_x + \max\{\rho_y,\rho_z\}
                                  + \max\{\rho_\eta,\rho_\zet\} \ge |H|, $$
contradicting~\refe{rho}.
\end{proof}

\begin{alphaclaim}\label{m:C}
If $U=A+a$, $V=B+b$, and $W=C+c$, with $a,b,c,\in H$, then letting
$(U^*,V^*,W^*):=\tau(U,V,W)$, we have
  $$ A^*\seq U^*+H,\quad B^*\seq V^*+H \quad\text{and}\quad C^*\seq W^*+H. $$
Also,
  $$ A^*+B^*+C^* \seq U^*+V^*+W^*. $$
\end{alphaclaim}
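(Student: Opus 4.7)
My starting observation will be that, since $a,b,c\in H$, the translations act within $H$-cosets: for every coset $x+H$ one has $U_x=A_x+a$, $V_x=B_x+b$, $W_x=C_x+c$, all contained in $x+H$, so that $\phi_H(U)=\phi_H(A)$, $\phi_H(V)=\phi_H(B)$, and $\phi_H(W)=\phi_H(C)$, where $\phi_H\colon G\to G/H$ is the quotient map.

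The first inclusion $A^*\seq U^*+H$ I would dispose of immediately: since $A^*=A\cup B\cup C$ and $U^*=U\cup V\cup W$, and unions commute with $\phi_H$, we obtain $\phi_H(A^*)=\phi_H(U^*)$, which is equivalent to $A^*+H=U^*+H$.

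For the remaining two inclusions my plan is to establish a \emph{slice dichotomy}: in every coset $x+H$ distinct from the exceptional coset $x_0+H$ of Claim~\refm{B}, each of $A_x,B_x,C_x$ is either empty or equal to $x+H$.  This is immediate when $C^*_x=x+H$ (all three slices must then be full); when $C^*_x=\est$ it must be argued, and here I would use the maximality observation preceding Lemma~\refl{emptystar} (which transfers a partial slice of one set across cosets related by $x+y+z\equiv g_0\pmod H$ to partial slices of the other two sets) together with Claim~\refm{A} and the bound \refe{rho} to rule out such configurations.  Granting the dichotomy, in every non-exceptional coset the $H$-translations act trivially on each slice (empty stays empty, full stays full), so $(U^*_x,V^*_x,W^*_x)=(A^*_x,B^*_x,C^*_x)$; and at $x_0+H$, assuming WLOG that $C_{x_0}$ is the partial slice while $A_{x_0}=B_{x_0}=x_0+H$, a direct computation gives $(A^*_{x_0},B^*_{x_0},C^*_{x_0})=(x_0{+}H,\,x_0{+}H,\,C_{x_0})$ and $(U^*_{x_0},V^*_{x_0},W^*_{x_0})=(x_0{+}H,\,x_0{+}H,\,C_{x_0}+c)$, from which $B^*\seq V^*+H$ and $C^*\seq W^*+H$ follow because $C_{x_0}$ and $C_{x_0}+c$ are both non-empty subsets of $x_0+H$.

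For the sumset inclusion I would take any coset of $G/H$ lying in $\phi_H(A^*+B^*+C^*)=\phi_H(A^*)+\phi_H(B^*)+\phi_H(C^*)$ and decompose it as $(x'+H)+(y'+H)+(z'+H)$ with $x'+H\in\phi_H(A^*)$, $y'+H\in\phi_H(B^*)$, and $z'+H\in\phi_H(C^*)$; by the slice dichotomy, $U^*_{x'}=x'+H$ and $V^*_{y'}=y'+H$ are full, while $W^*_{z'}$ is a non-empty subset of $z'+H$, so $(x'+H)+(y'+H)+W^*_{z'}=x'+y'+z'+H$ is contained in $U^*+V^*+W^*$.  Since $A^*+B^*+C^*$ is a union of such full cosets of $H$, the inclusion $A^*+B^*+C^*\seq U^*+V^*+W^*$ follows.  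The main obstacle will be the slice dichotomy itself: ruling out partial slices of $A,B,C$ in cosets where $C^*_x=\est$ and $x\not\equiv x_0\pmod H$ is the delicate part and requires carefully combining Claim~\refm{A} (which forces many $B^*$-slices to be empty) with the maximality observation and the bound \refe{rho}.
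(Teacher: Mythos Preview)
Your plan hinges on the ``slice dichotomy'' (outside one exceptional coset, every slice $A_x,B_x,C_x$ is either empty or full), but this dichotomy is \emph{not available} at the point where Claim~\refm{C} must be proved, and in fact it is \emph{false} under the standing hypotheses. The only structural information on the table is Claims~\refm{A} and~\refm{B}, and Claim~\refm{B} speaks only about cosets where $C^*_x$ is partial; it says nothing about cosets with $C^*_x=\est$. The machinery that constrains partial slices in those cosets is precisely Claims~\refm{D}--\refm{G}, every one of which invokes Claim~\refm{C} (Claim~\refm{D} directly, the rest through \refm{D}/\refm{E}/\refm{F}). So your proposed route is circular. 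Moreover, the paper's concluding argument exhibits, under $\rho\le|H|-2$, a coset $y+H$ with $B_y$ partial, $|C_y|=1$, $A_y$ full and $C^*_y=\est$; this coset witnesses the failure of your dichotomy away from $x_0$. Your ``WLOG $C_{x_0}$ is the partial slice'' is likewise unjustified here --- that is exactly the content of Claim~\refm{D}, which uses Claim~\refm{C}.

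The paper's actual argument is quite different and avoids any global slice structure. For the first part it uses the quantitative bound $\rho\ge t_x|H|-(|A_x|+|B_x|+|C_x|)$ with $\rho<|H|$ to force $|U_x|+|V_x|+|W_x|=|A_x|+|B_x|+|C_x|>(t_x-1)|H|$, so by pigeonhole at least $t_x$ of $U^*_x,V^*_x,W^*_x$ are non-empty. For the second part it fixes a hypothetical coset $g_1+H$ in $A^*+B^*+C^*$ but not in $U^*+V^*+W^*$, picks $x,y,z$ with $A^*_x,B^*_y,C^*_z$ (hence $U^*_x,V^*_y,W^*_z$) non-empty, and then uses Claim~\refm{B} \emph{only at $z$} (where $C^*_z$ must be partial) together with the pigeonhole estimate $|V^*_y|+|W^*_z|\le|H|$ to bound $\rho_y+\rho_z\ge|H|$, contradicting~\refe{rho}. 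No dichotomy is needed; the argument is local to the single problematic triple of cosets.
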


\begin{proof}
The first assertion can be equivalently restated as follows: if, for a group
element $x$, some of the slices $A^*_x$, $B^*_x$, and $C^*_x$ are non-empty,
then the corresponding slices from among $U^*_x$, $V^*_x$, and $W^*_x$ are
non-empty, too. Let $t_x$ be the number of slices from among $A^*_x$, $B^*_x$
and $C^*_x$ that are non-empty. Then~\refe{rho} and~\refe{tx1} give
  $$ |H| > \rho \ge t_x|H|-\big(|A_x|+|B_x|+|C_x|\big), $$
which further leads to $|U_x|+|V_x|+|W_x|=|A_x|+|B_x|+|C_x|>(t_x-1)|H|$;
consequently, the pigeonhole principle ensures that at least $t_x$ slices
from among $U^*_x$, $V^*_x$ and $W^*_x$ are nonempty, and since
$W^*_x\subseteq V^*_x\subseteq U^*_x$ and $C^*_x\subseteq B^*_x\subseteq
A^*_x$ by definition of $\tau$, the claimed result follows.

We proceed to prove the inclusion $A^*+B^*+C^*\seq U^*+V^*+W^*$. Assuming for
a contradiction that it fails to hold, there exists a coset $g_1+H$ contained
in $A^*+B^*+C^*$
 but not in $U^*+V^*+W^*$. Find group elements $x$, $y$, and
$z$ with $x+y+z\equiv g_1\pmod H$ such that $(A^*_x,B^*_y,C^*_z)$, and
therefore also $(U^*_x,V^*_y,W^*_z)$,
 has all its components non-empty.
Since $U^*_x+V^*_y+W^*_z\subsetneq g_1+H$ and $U^*_x\ne\est$, the pigeonhole
principle gives
 $|V^*_y|+|W^*_z|\le|H|$, and hence
\begin{equation}\label{e:locB}
  |\o{V^*_y}| + |\o{W^*_z}| \ge |H|.
\end{equation}

If $C^*_z$ were full, then all of $A_z,B_z,C_z$, and consequently $W^*_z$,
would be full, contradicting $U^*_x+V^*_y+W^*_z\subsetneq g_1+H$. Thus
$C^*_z$ is partial, and by Claim~\refm{B}, two of the slices $A_z$, $B_z$,
and $C_z$ are full. As a result, using \refe{tx3} we obtain
\begin{equation}\label{e:locBa}
  \rho_z \ge |\o{C^*_z}| = |\o{W^*_z}|,
\end{equation}
and we also conclude that $B^*_z$ and $V^*_z$ both are full. Consequently, if
we had $W^*_y\ne\est$, this would result in
  $$ g_1+H = U^*_x+V^*_z+W^*_y \seq U^*+V^*+W^*, $$
a contradiction; thus, $W^*_y=\est$, and comparing this to $W^*_z\ne\est$,
we obtain $y\not\equiv z\pmod H$. Since $B^*_y\ne\est$ and $W^*_y=\est$, from
\refe{tx1} we now get
\begin{multline*}
  \rho_y \ge 2|H|-(|A_y|+|B_y|+|C_y|) = 2|H|-(|U_y|+|V_y|+|W_y|) \\
       = 2|H|-(|U^*_y|+|V^*_y|+|W^*_y|) \ge |H|-|V^*_y| \ge |\o{V^*_y}|.
\end{multline*}
In view of~\refe{locBa} and~\refe{locB}, this yields
  $$ \rho\ge \rho_y+\rho_z\ge |\o{V^*_y}| + |\o{W^*_z}|\ge |H|, $$
contradicting~\refe{rho}.
\end{proof}

For a set $S\seq G$, by $\<S\>$ we denote the subgroup of $G$ generated by
$S$. Thus, $\<S-S\>$ is the smallest subgroup $H\le G$ such that $S$ lies in
an $H$-coset.
\begin{alphaclaim}\label{m:D}
We have $H\le\pi(C^*)$; that is, $C^*$ is a union of $H$-cosets.
\end{alphaclaim}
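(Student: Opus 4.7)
The plan is to argue by contradiction: if some $C^*_{\hat x}$ is partial, I will construct an $H$-translation $(a,b,c)\in H^3$ whose $\tau$-image has a $*$-sumset strictly containing $A^*+B^*+C^*$, contradicting the minimality in assumption~iii) of Section~\refs{setup} (Claim~\refm{C} supplies the reverse inclusion). By Claim~\refm{B} the coset $\hat x+H$ is then unique with this property, and exactly one of $A_{\hat x},B_{\hat x},C_{\hat x}$ is partial---call it $U_{\hat x}$, with $(U,V,W)$ a permutation of $(A,B,C)$---while the other two are full.

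Using the observation preceding Lemma~\refl{emptystar}, I pick $y,z$ with $\hat x+y+z\equiv g_0\pmod H$ and with $V_y,W_z$ partial. Claim~\refm{A} then gives $V_z,W_y\ne\est$, $U_y=U_z=\est$, and $B^*_y=B^*_z=\est$; combined with $U_y=U_z=\est$, these force $V_y\cap W_y=\est$ in $y+H$ and $V_z\cap W_z=\est$ in $z+H$, so $V_z-W_z$ is a nonempty subset of $H\stm\{0\}$. The proof of Lemma~\refl{emptystar} also yields $A^*+B^*+C^*=G\stm(g_0+H)$, so any new sum element landing in $g_0+H$ suffices for the strict enlargement.

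I pick $v,w\in H$ with $v-w\in V_z-W_z$ and form $(\tilde U,\tilde V,\tilde W):=(U,V-v,W-w)$, with the associated $\tau$-image $(\tilde A^*,\tilde B^*,\tilde C^*)$. Three facts then deliver the strict enlargement: (a) $\tilde C^*_{\hat x}\ne\est$, since the two full slices $V_{\hat x},W_{\hat x}$ survive the $H$-translation while $U_{\hat x}$ is untouched; (b) $\tilde A^*_y\supseteq\tilde V_y\ne\est$ because $\tilde U_y=\est$; and (c) $\tilde B^*_z=(V_z-v)\cap(W_z-w)\ne\est$ by the choice of $v-w$. Hence $\tilde A^*_y+\tilde B^*_z+\tilde C^*_{\hat x}$ is a nonempty subset of $\hat x+y+z+H=g_0+H$, absent from $A^*+B^*+C^*$.

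To extract the contradiction from~iii) one must finally verify that the translated triple is not $\tau$-fixed, so that the ``strict complement'' clause of~iii) (rather than the ``sorted'' clause) applies. In the cases $U=A$ and $U=B$ this is immediate: at coset $y+H$ we have $\tilde U_y=\est$ together with a nonempty slice of $\tilde V$ or $\tilde W$, which obstructs any sorted chain $\tilde A\supseteq\tilde B\supseteq\tilde C$. The case $U=C$ is the main technical obstacle, since then both $\tilde V_y$ and $\tilde W_y$ are nonempty and sortedness cannot be ruled out at $y+H$ directly; I would handle it by also considering the symmetric translation with parameters in $V_y-W_y$ (which produces the alternative sum $\tilde A^*_z+\tilde B^*_y+\tilde C^*_{\hat x}$) and arguing that sortedness of both families simultaneously would force containment relations of the form $V_y+(V_z-W_z)\seq W_y^c$ and its mirror, incompatible with the Box Lemma~\refl{box} and the hypothesis $\rho\le|H|-2$. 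This last case analysis is the only genuinely delicate part of the argument.
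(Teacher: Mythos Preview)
Your treatment of the cases $U\in\{A,B\}$ is correct and close in spirit to the paper's: both exploit an $H$-translation together with Claim~\refm{C} to produce a $\tau$-image whose sumset meets $g_0+H$, and then rule out the ``sorted'' escape in assumption~iii) by exhibiting a coset where the chain $A-a\supseteq B-b\supseteq C-c$ visibly fails. (The paper translates a single component and uses the full/partial mismatch at the coset $\hat x+H$ itself---e.g.\ $B_{\hat x}$ partial while $C_{\hat x}$ is full kills $B\supseteq C$---whereas you translate two components and use the empty $U$-slice at $y$; either device works.)

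The gap is in the case $U=C$, and it is not a technicality you can patch by symmetry. Sortedness of your $z$-translation says $A-v\supseteq B-w\supseteq C$, which at $y$ gives only $B_y+(v-w)\subseteq A_y$ for the \emph{one} chosen $v-w\in A_z-B_z$; this is not the relation $V_y+(V_z-W_z)\subseteq W_y^c$ you assert, and a single such inclusion together with its $y\leftrightarrow z$ mirror does not pin down enough structure to force $\rho\ge|H|-1$ via Lemma~\refl{box}. The paper's proof in this case does \emph{not} try to rule out sortedness; it exploits it. Translating by \emph{every} $b\in B_y-A_y$ and \emph{every} $a\in A_y-B_y$, minimality forces $C\subseteq B-b\subseteq A$ and $C\subseteq B\subseteq A-a$, and this is done for \emph{every} admissible $y$ in the full set $Y$ of such cosets. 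Combining these over pairs $y,\eta\in Y$ yields $\langle B_\eta-B_\eta\rangle\le\pi(A_y)$, so the subgroup $K:=\sum_{\eta\in Y}\langle B_\eta-B_\eta\rangle$ stabilises every $A_y$. One then checks $C=C^*$, so all partial triples sum into $g_0+H$, whence $K$ stabilises $(g_0+H)\setminus\{g_0\}$ and thus $K=\{0\}$; this forces $|B_\eta|=1$ throughout $Y$. Only after this reduction does a direct $\rho$-estimate via Lemma~\refl{box} give $\rho\ge|H|-1$. The passage through the subgroup $K$ and the iteration over all of $Y$ is the real content of the $U=C$ case, and your two-translation sketch does not supply it.
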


\begin{proof}
If the assertion is wrong, then, by Claim~\refm{B}, there is a unique coset
$z+H$ such that $C^*_z$ is partial; moreover, of the three slices $A_z$,
$B_z$, and $C_z$, one is partial while the other two are full. To begin with,
we show that $C_z$ partial, whereas $B_z$ and $C_z$ are full.

Aiming at a contradiction, assume that, for instance, $B_z$ is partial, and
therefore there exist $x,y\in G$ with $x+y+z\equiv g_0\pmod H$ such that
$(A_x,B_z,C_y)$ has all its components non-empty. Observing that $A_y\ne\est$
by Claim~\refm{A}, fix arbitrarily an element $a\in A_y-C_y\seq H$. Letting
$(U^*,V^*,W^*):=\tau(A-a,B,C)$, we have then $U^*_x\ne\est$ (as $A_x\ne\est$),
$V^*_y\ne\est$ (as $(A_y-a)\cap C_y\ne\est$), and $W^*_z\ne\est$ (by
Claim~\refm{C}). Hence,
\begin{equation}\label{e:locD1}
  (U^*+V^*+W^*) \cap (g_0+H) \ne\est,
\end{equation}
whereas we know that
\begin{equation}\label{e:locD2}
   (A^*+B^*+C^*) \cap (g_0+H)=\est.
\end{equation}
Since
\begin{equation}\label{e:locD3}
  A^*+B^*+C^* \seq U^*+V^*+W^*
\end{equation}
by Claim~\refm{C}, this contradicts minimality of $|G\stm(A^*+B^*+C^*)|$,
unless $(U^*,V^*,W^*)=(A-a,B,C)$; that is, unless $C\seq B\seq A-a$. This,
however, is inconsistent with the assumption that $B_z$ is partial and $C_z$
is full.

We have shown that $B_z$ cannot be partial, and a similar argument shows that
neither can $A_z$. Consequently, $C_z$ is partial while both $A_z$ and $B_z$
are full, and we now re-use the argument above in these new settings.

Since $C_z$ is partial, there exist $x,y\in G$ with $x+y+z\equiv g_0\pmod H$
such that $(A_x,B_y,C_z)$ has all its components non-empty. Let $X$ be a set
of representatives modulo $H$ for all possible such choices of $x$ and
likewise let $Y$ be a set of representatives modulo $H$ for all such choices
of $y$.
By Claim~\refm{A}, for every pair $(x,y)\in X\times Y$ with $x+y+z\equiv
g_0\pmod H$, we have $B_x,A_y\ne\est$; hence, $X$ and $Y$ coincide modulo
$H$, and we can assume that, indeed, $X=Y$ holds.

Fix $(x,y)\in X\times Y$ with $x+y+z\equiv g_0\pmod H$, and suppose that
$b\in B_y-A_y\seq H$. Letting $(U^*,V^*,W^*):=\tau(A,B-b,C)$, we have then
$U^*_x\ne\est$ (as $A_x\ne\est$), $V^*_y\ne\est$ (as
 $(B_y-b)\cap A_y\ne\est$), and $W^*_z\ne\est$ (by Claim~\refm{C}). Hence,
\refe{locD1} holds true, and comparing it with \refe{locD2} and \refe{locD3},
we get $(U^*,V^*,W^*)=(A,B-b,C)$, implying $C\seq B-b\seq A$ --- for
otherwise the minimality of $|G\stm(A^*+B^*+C^*)|$ would be contradicted.
Likewise, for $a\in A_y-B_y\seq H$, letting $(U^*,V^*,W^*):=\tau(A-a,B,C)$,
we have then $U^*_x\ne\est$ (as $A_x\ne\est$), $V^*_y\ne\est$ (as
 $(A_y-a)\cap B_y\ne\est$), and $W^*_z\ne\est$ (by Claim~\refm{C}). Hence,
\refe{locD1} holds true, and comparing it with \refe{locD2} and \refe{locD3},
we get $(U^*,V^*,W^*)=\tau(A-a,B,C)$, implying $C\seq B\seq A-a$. To
summarize, for each $y\in Y$ and each $b\in B_y-A_y$  and $a\in A_y-B_y$, we
have
\begin{equation}\label{e:locDa}
  C\seq B-b\seq A \quad\text{and}\quad C\seq B\seq A-a.
\end{equation}
As a corollary, $B-B_y+A_y\seq A$, implying $A_y+B_\eta-B_y\seq A_\eta$ for
all $y,\eta\in Y$. Switching the roles of $y$ and $\eta$, we also get
$A_\eta+B_y-B_\eta\seq A_y$, and as a result,
  $$ A_y + B_\eta-B_\eta + B_y-B_y \seq A_y. $$
Letting $K:=\sum_{\eta\in Y} \<B_\eta-B_\eta\>$,  we conclude in view of
$Y=X$ that $K\le\pi(A_x)$ for each $x\in X$. From \eqref{e:locDa}, we also
see that
  $$ C\subseteq A\cap B. $$
Thus $C=A\cap B\cap C=C^*$. By Claim~\refm{B}, the set $C$ has then exactly
one partial slice; namely, $C_z$. It follows that all non-trivial triples
$(A_\xi,B_\eta,C_\zet)$ with $\xi+\eta+\zet\equiv g_0\pmod H$ have
$\zet\equiv z\pmod H$, and therefore have $\xi\in X$. Since the above-defined
subgroup $K$ lies below the period of each set $A_\xi$ with $\xi\in X$, it
must also lie below the period of $(A+B+C)\cap(g_0+H)=(g_0+H)\stm\{g_0\}$.
This, however, is only possible if $K=\{0\}$, forcing $|B_\eta|=1$ for each
$\eta\in Y$.

Let  $(A_\xi,B_\eta,C_z)$ be a nontrivial triple with $(\xi,\eta)\in X\times
Y$ and $\xi+\eta+z\equiv g_0\pmod H$. Since $B_z$ is full while $|B_\xi|=1$,
we have $z\not\equiv\xi\pmod H$, whence
  $$ \rho \ge \rho_z + \rho_\xi
       \ge (|H|-|C_z|) + (|H|-|A_\xi|-|B_\xi|-|C_\xi|) $$
by \refe{tx3} and \refe{tx1}. Since $|B_\xi|=1$, and $C^*_z\ne\est$ yields
$C_\xi=\est$ by Claim~\refm{A}, we conclude that
  $$ \rho \ge 2|H|-1 - (|C_z|+|A_\xi|), $$
and to obtain a contradiction with \refe{rho} and complete the proof it
remains to notice that $|C_z|+|A_\xi|\le |H|$ by Lemma~\refl{box}.
\end{proof}

\begin{alphaclaim}\label{m:E}
If, for some $y\in G$, at least two among the slices $A_y$, $B_y$, and $C_y$
are non-empty, then also $B^*_y$ is non-empty.
\end{alphaclaim}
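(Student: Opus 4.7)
The plan is to argue by contradiction: suppose $B^*_y=\est$ while at least two among $A_y$, $B_y$, $C_y$ are non-empty, and aim at a contradiction with~\refe{rho}. Since both the hypothesis and the conclusion are invariant under permutations of $(A,B,C)$, as $\tau$ is a symmetric function of its arguments, I may assume without loss of generality that $A_y$ and $B_y$ are two of the non-empty slices. The vanishing of $B^*_y$ then forces $A_y\cap B_y=\est$, and similarly $A_y\cap C_y=B_y\cap C_y=\est$ in case $C_y\ne\est$.

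The first step will be to apply Lemma~\refl{emptystar}(ii) to the empty slice $B^*_y$, producing $x,z\in G$ with $A^*_x\ne\est$, $C^*_z\ne\est$, and $x+y+z\equiv g_0\pmod H$. Claim~\refm{D} then forces $C^*_z$ to be full, so that $A_z=B_z=C_z=z+H$ and in particular $|\o{U_z}|=0$ for every $U\in\{A,B,C\}$. I will invoke Claim~\refm{A} with its ``$x,y,z$'' playing the roles of our $z,y,x$; its hypothesis $C^*_z\ne\est$ is met.

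A short case analysis then concludes the argument. If $C_y\ne\est$, the three permutations $(A,B,C)$, $(A,C,B)$, $(B,C,A)$ satisfy the hypothesis ``$V_y,W_x\ne\est$'' of Claim~\refm{A} provided $C_x$, $B_x$, $A_x$ respectively are non-empty; in each instance the resulting conclusion ``$U_y=\est$'' contradicts $A_y\ne\est$ or $B_y\ne\est$. Hence $A_x=B_x=C_x=\est$, contradicting $A^*_x\ne\est$. If $C_y=\est$, permutation $(A,B,C)$ similarly forces $C_x=\est$, while permutations $(C,B,A)$ and $(C,A,B)$, via the conclusion ``$V_x\ne\est$'', rule out the scenarios in which $A_x$ or $B_x$ alone is the non-empty slice at $x$, forcing \emph{both} to be non-empty. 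Finally, the second inequality of Claim~\refm{A} applied to the permutation $(C,A,B)$ gives $|\o{A^*_y}|+|\o{A^*_x}|+4|\o{C_z}|\ge 2|H|$; since $|\o{C_z}|=0$, this yields $|\o{A^*_y}|+|\o{A^*_x}|\ge 2|H|$, which is impossible as each summand is at most $|H|-1$.

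The only point that needs care is the bookkeeping of which permutation matches which non-empty slice at $x$; beyond this, the crucial observation is that Claim~\refm{D} makes the $|\o{U_z}|$ term in Claim~\refm{A}'s second inequality vanish, collapsing that inequality into the sharp bound $|\o{A^*_y}|+|\o{A^*_x}|\ge 2|H|$ that cannot hold when both slices are non-empty.
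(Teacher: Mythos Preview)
Your argument is correct. The symmetry invoked in the opening WLOG is legitimate because every tool you subsequently use---Lemma~\refl{emptystar}, Claim~\refm{D}, and Claim~\refm{A}---is stated purely in terms of $A^*,B^*,C^*$ and of arbitrary permutations $(U,V,W)$ of $(A,B,C)$, all of which are invariant under relabelling the three sets. The case analysis is clean, and the final collapse of Claim~\refm{A}'s second inequality to $|\o{A^*_y}|+|\o{A^*_x}|\ge 2|H|$ via $|\o{C_z}|=0$ is a nice touch.

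That said, the paper reaches the contradiction far more directly, bypassing Claim~\refm{A} entirely. After obtaining $x,z$ with $A^*_x\ne\est$ and $C^*_z\ne\est$ from Lemma~\refl{emptystar}, and observing via Claim~\refm{D} that $A_z,B_z,C_z$ are all full, the paper simply notes: at $x$ at least one of the three slices is non-empty, at $y$ at least two are, and at $z$ all three are. A system-of-distinct-representatives (Hall) argument then yields a permutation $(U,V,W)$ with $U_x,V_y,W_z$ all non-empty; since $W_z$ is a full coset, $U_x+V_y+W_z=g_0+H\seq A+B+C$, contradicting $g_0\notin A+B+C$. This is a two-line proof once Claim~\refm{D} is in hand. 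Your route, via the structural and quantitative conclusions of Claim~\refm{A}, works but is considerably longer; the paper's argument exploits the fact that one full slice in the right position already forces $g_0$ into the sumset, so no inequality is needed.
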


\begin{proof}
Suppose for a contradiction that $B^*_y=\est$. By Lemma~\refl{emptystar},
there exist $x,z\in G$ with $x+y+z\equiv g_0\pmod H$ such that $A^*_x$ and
$C^*_z$ are non-empty. As a result, at least one of $A_x$, $B_x$, and $C_x$
is non-empty (as $A^*_x\ne\est$), at least two of $A_y$, $B_y$, and $C_y$ are
non-empty (by the assumption of the claim), and all three slices $A_z,B_z$,
and $C_z$ are non-empty (as follows from $C^*_z\ne\est$). Consequently, there
is a permutation $(U,V,W)$ of the original trio $(A,B,C)$ such that $U_x$,
$V_y$, and $W_z$ are all non-empty. Moreover, by Claim~\refm{D}, from
$C^*_z\ne\est$ it follows that $A_z$, $B_z$, and $C_z$ are full. In
particular, $W_z$ is full, and so $g_0+H=U_x+V_y+W_z\seq A+B+C$, a
contradiction.
\end{proof}

\begin{alphaclaim}\label{m:F}
Let $Z$ be a set of representatives of all those cosets $z+H$ such that
$A_z,B_z,C_z\ne\est$ but $C^*_z=\est$. Assuming that $Z\ne\est$, let
$K_B:=\sum_{z\in Z}\<B_z-B_z\>$ and $K_C:=\sum_{z\in Z}\<C_z-C_z\>$. Then for
each $z\in Z$, we have $K_B\le\pi(A_z)$ and $K_C\le\pi(B_z)\cap \pi(A_z)$.
\end{alphaclaim}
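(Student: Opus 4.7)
The plan is to adapt the template of Claim~\refm{D}: for each $z'\in Z$ and each intra-slice difference $h\in\<B_{z'}-B_{z'}\>$ or $h\in\<C_{z'}-C_{z'}\>$, I would consider a transform obtained by shifting one of $A$, $B$, or $C$ by $h$, and use the minimality of $|G\stm(A^*+B^*+C^*)|$ in assumption~(iii), together with Claim~\refm{C}, to force the transform to coincide with its input; the resulting nested inclusion, combined across all $z'\in Z$, then yields the required stabilizer containment.

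Concretely, fix $z,z'\in Z$, pick $\gamma,\gamma'\in C_{z'}$, and set $h:=\gamma-\gamma'\in H$. I would examine $(U^*,V^*,W^*):=\tau(A-h,B,C)$; Claim~\refm{C} applied with shift triple $(-h,0,0)\in H^3$ gives $A^*+B^*+C^*\seq U^*+V^*+W^*$. Using that all three slices $A_{z'},B_{z'},C_{z'}$ are non-empty (because $z'\in Z$) and that $\gamma'=\gamma-h$, a common element of the three shifted slices within $z'+H$ can be exhibited, forcing $W^*\ne\est$. Then, using Claim~\refm{A} together with the maximality observation preceding Lemma~\refl{emptystar}, one locates auxiliary indices producing a triple of non-empty slices $U^*_\xi,V^*_\eta,W^*_\zeta$ with $\xi+\eta+\zeta\equiv g_0\pmod H$. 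This would place an element of $g_0+H$ into $U^*+V^*+W^*$, which --- being disjoint from $A+B+C\supseteq A^*+B^*+C^*$ --- strictly extends $A^*+B^*+C^*$ and contradicts minimality, unless $(U^*,V^*,W^*)=(A-h,B,C)$, equivalently (by Lemma~\refl{stab}) $A-h\supseteq B\supseteq C$, yielding $B_w+h\seq A_w$ and $C_w\seq B_w$ for every coset $w+H$. Running the analogous arguments for $\tau(A,B-h,C)$ and $\tau(A,B,C-h)$ gives the symmetric inclusions, and accumulating over all $\gamma,\gamma'\in C_{z'}$ and all $z'\in Z$ (so the shifts generate the subgroup $K_C$) produces $A_z+K_C=A_z$ and $B_z+K_C=B_z$, establishing $K_C\le\pi(A_z)\cap\pi(B_z)$. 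The argument for $K_B\le\pi(A_z)$ is identical, with $h$ ranging over $\<B_{z'}-B_{z'}\>$; only the $A$-stabilizer conclusion survives because any corresponding ``$B$-stability'' under these shifts is vacuous, which accounts for the asymmetry in the statement.

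The main obstacle will be the slice-locating step: constructing the triple $(U^*_\xi,V^*_\eta,W^*_\zeta)$ of non-empty slices summing to $g_0\pmod H$ after the shift. This parallels the corresponding step in Claim~\refm{D}, but is somewhat more delicate since here the shift does not automatically create an intersection between an $A$-slice and a $C$-slice; instead, one must use the non-emptiness of all three slices at $z'$ together with the positions forced by Claim~\refm{A} to produce the required non-empty triple, typically in a coset different from $z'+H$. Once this slice-locating step is in hand, the remaining bookkeeping --- forcing the nested inclusions and accumulating them into the stabilizer containments --- is essentially mechanical.
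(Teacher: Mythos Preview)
Your overall framework --- shift by an element of $H$, invoke Claim~\refm{C} and the minimality hypothesis to force the nested chain $A'\supseteq B'\supseteq C'$, then accumulate the resulting inclusions --- is exactly the paper's strategy. But the specific shifts you propose do not work, and this is a genuine gap, not just a detail to be filled in. You shift a \emph{single} set by an element $h$ of the \emph{within-slice} difference set $C_{z'}-C_{z'}$ (or $B_{z'}-B_{z'}$), and then need $W^*_{z'}\ne\est$, i.e., a common element of $(A_{z'}-h),B_{z'},C_{z'}$ (or the analogous triple). Since $C^*_{z'}=A_{z'}\cap B_{z'}\cap C_{z'}=\est$ by the definition of $Z$, and since $h\in C_{z'}-C_{z'}$ carries no information about how $C_{z'}$ sits relative to $A_{z'}$ or $B_{z'}$, there is no reason for this intersection to be non-empty; indeed $B_{z'}\cap C_{z'}$ itself may well be empty. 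Your fallback --- locating the triple ``in a coset different from $z'+H$'' via Claim~\refm{A} --- does not help either: by Claim~\refm{D} the only cosets with $W^*$ non-empty after such a shift are those where $C^*$ was already full, and these contribute nothing new to $U^*+V^*+W^*$ beyond $A^*+B^*+C^*$.

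The paper resolves this by shifting \emph{two} sets simultaneously, with the primary shift drawn from a \emph{cross}-difference set. For $K_B\le\pi(A_z)$: take $b\in B_z-A_z$ (so that $(B_z-b)\cap A_z\ne\est$ automatically) and then choose $c\in H$ freely to force $A_z\cap(B_z-b)\cap(C_z-c)\ne\est$; this guarantees $W^*_z\ne\est$ for $(U^*,V^*,W^*):=\tau(A,B-b,C-c)$. Lemma~\refl{emptystar} supplies $x,y$ with $A^*_x,B^*_y\ne\est$ and $x+y+z\equiv g_0\pmod H$, and Claim~\refm{C} then gives $U^*_x,V^*_y\ne\est$, producing the element of $g_0+H$ directly --- no search in other cosets is needed. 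The forced chain $C-c\seq B-b\seq A$ yields $B+(A_z-B_z)\seq A$; restricting to slice $\zeta\in Z$ and then swapping $z\leftrightarrow\zeta$ gives $A_z+(B_z-B_z)+(B_\zeta-B_\zeta)\seq A_z$, whence $K_B\le\pi(A_z)$. Note that this last bookkeeping step also differs from yours: your inclusions of the form $B_w+h\seq A_w$ for $h\in K_C$ give only $B_z+K_C\seq A_z$, not $A_z+K_C=A_z$, so even granting the slice-locating step your accumulation would not close. The two remaining assertions ($K_C\le\pi(B_z)$ and $K_C\le\pi(A_z)$) are handled by the same two-shift device with $c$ ranging over $C_z-B_z$ and $C_z-A_z$ respectively.
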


\begin{proof}
Fix $z\in Z$. By Lemma~\refl{emptystar}, there exist $x,y\in G$ with
$x+y+z\equiv g_0\pmod H$ and $A^*_x,B^*_y\ne\est$. Furthermore, for each
$b\in B_z-A_z\seq H$, we have $(B_z-b)\cap A_z\ne\est$, and we can find
 $c\in H$ so that, indeed, $A_z\cap(B_z-b)\cap(C_z-c)\ne\est$. Letting
$(U^*,V^*,W^*):=\tau(A,B-b,C-c)$, we thus have $W^*_z\ne\est$ and, by
Claim~\refm{C}, we have $U^*_x,V^*_y\ne\est$. This shows that
  $$ (U^*+V^*+W^*)\cap(g_0+H) \,\ne\, (A^*+B^*+C^*)\cap(g_0+H)\, =\, \est, $$
and since $A^*+B^*+C^*\seq U^*+V^*+W^*$ by Claim~\refm{C}, the minimality of
the quantity
$|G\stm(A^*+B^*+C^*)|$ implies $(U^*,V^*,W^*)=(A,B-b,C-c)$; that is,
 $C-c\seq B-b\seq A$. Recalling that $b$ was chosen to be an arbitrary element
of $B_z-A_z$, we conclude that $B+A_z-B_z\seq A$, and in particular,
$A_z+B_\zet-B_z\seq A_\zet$ for any $\zet\in Z$. Switching the roles of $z$
and $\zet$, we also get $A_\zet+B_z-B_\zet\seq A_z$, and combining these
inclusions, we obtain $A_z+(B_z-B_z)+(B_\zet-B_\zet)\seq A_z$. As a result,
$K_B\le\pi(A_z)$, as required.

The second assertion follows in a similar way: for each $c\in C_z-B_z$,
there exists $a\in H$ with $(C_z-c)\cap B_z\cap(A_z-a)\ne\est$, and then the
minimality of $|G\stm(A^*+B^*+C^*)|$ gives $C-c\seq B\seq A-a$; this shows
that $C_\zet+B_z-C_z\seq B_\zet$ for all $z,\zet\in Z$, and combining this
with $C_z+B_\zet-C_\zet\seq B_z$ yields $K_C\leq \pi(B_z)$.

The final portion of the claim also follows in a similar way. For each
$c\in C_z-A_z$, there exists $b\in H$ with
$(C_z-c)\cap (B_z-b)\cap A_z\neq \emptyset$, and then the minimality of
$|G\setminus (A^*+B^*+C^*)|$ gives $C-c\subseteq B-b\subseteq A$. This
shows that $C_\zeta+A_z-C_z\subseteq A_\zeta$ for all $z,\zeta\in Z$, and
combining this with $C_z+A_\zeta-C_\zeta\subseteq A_z$ yields
$K_C\subseteq \pi(A_z)$.
\end{proof}

\begin{corollary}\label{c:F}
If, for some $z\in G$, we have $A_z,B_z,C_z\ne\est$ while $C^*_z=\est$, then
each of $B_z$ and $C_z$ is contained in a coset of $\pi(A_z)$.
\end{corollary}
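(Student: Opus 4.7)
The plan is that this corollary is an essentially immediate consequence of Claim~\refm{F}. The hypothesis on $z$ is precisely the condition for $z$ to represent one of the cosets collected into the set $Z$ defined in Claim~\refm{F}: namely, $A_z, B_z, C_z \ne \est$ and $C^*_z = \est$. So I would begin by taking any set $Z$ of representatives of such cosets that includes (the $H$-class of) the given $z$, which is permitted since $Z\ne\est$ by hypothesis.

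Once we are in the setting of Claim~\refm{F}, we have the subgroups $K_B = \sum_{\zet\in Z}\<B_\zet-B_\zet\>$ and $K_C = \sum_{\zet\in Z}\<C_\zet-C_\zet\>$, and the claim tells us $K_B \le \pi(A_z)$ and $K_C \le \pi(A_z)$ (the last inclusion being the ``final portion'' spelled out at the end of the proof of Claim~\refm{F}). Since $z\in Z$, the summand $\<B_z-B_z\>$ is contained in $K_B$ and the summand $\<C_z-C_z\>$ is contained in $K_C$. Therefore $B_z - B_z \seq \pi(A_z)$ and $C_z - C_z \seq \pi(A_z)$, which is exactly the statement that each of $B_z$ and $C_z$ lies in a single coset of $\pi(A_z)$.

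There is no real obstacle here, since all the work was done inside Claim~\refm{F}: the corollary is just the pointwise restatement of the ``$K_B,K_C \le \pi(A_z)$'' conclusion for a single index $z$, phrased without reference to the auxiliary set $Z$ or the global subgroups $K_B$ and $K_C$. The only minor bookkeeping point to be careful about is that the definition of $Z$ uses representatives modulo $H$, so one should note at the outset that slices depend only on the $H$-class of their index, which makes including the given $z$ in $Z$ unambiguous.
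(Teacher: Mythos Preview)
Your proposal is correct and follows essentially the same approach as the paper's own proof: both simply note that the hypothesis places $z$ (or its $H$-class) in the set $Z$ of Claim~\refm{F}, then read off $\<B_z-B_z\>\le K_B\le\pi(A_z)$ and $\<C_z-C_z\>\le K_C\le\pi(A_z)$. Your extra care about choosing $Z$ to contain the given $z$ is a harmless bookkeeping remark that the paper leaves implicit.
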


\begin{proof}
Let $K_B,K_C\le H$ be as in Claim~\refm{F}. Then
 $\<B_z-B_z\>\le K_B\le\pi(A_z)$ shows that $B_z$ is contained in a coset of
$\pi(A_z)$, and then from $\<C_z-C_z\>\le K_C\le\pi(A_z)$ we derive that
$C_z$ is contained in a coset of $\pi(A_z)$.
\end{proof}

\begin{alphaclaim}\label{m:G}
If, for some $y\in G$, the slice $B_y$ is partial, then $A_y$ and $C_y$ are
both non-empty while $C^*_y$ is empty.
\end{alphaclaim}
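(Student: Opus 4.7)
The conclusion $C^*_y=\est$ is immediate from Claim~\refm{D}: since $C^*$ is a union of $H$-cosets, $C^*_y$ is either $\est$ or the full coset $y+H$, and the latter would force $B_y\supseteq C^*_y=y+H$, contradicting that $B_y$ is partial.

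For the remaining conclusions, note that Lemma~\refl{box}, Claim~\refm{A}, and Claim~\refm{E} are all invariant under interchange of the roles of $A$ and $C$ in the triple, so it suffices to prove $A_y\ne\est$; the assertion $C_y\ne\est$ then follows by the symmetric argument. Assume for contradiction that $A_y=\est$. Since $B_y$ is partial, the observation from the paragraph preceding Lemma~\refl{emptystar}, applied to $B_y$, supplies $x,z\in G$ with $A_x,C_z$ partial and $x+y+z\equiv g_0\pmod H$. By Claim~\refm{D}, each of $C^*_x,C^*_y,C^*_z$ is empty, since otherwise it would be a full $H$-coset and force $A_x$, $B_y$, or $C_z$ respectively to be full. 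The plan is then to derive $\rho\ge|H|$, contradicting~\refe{rho}. First bound $\rho_y$ via~\refe{tx1}: if $C_y=\est$, only $A^*_y=B_y$ is non-empty, giving $t_y=1$ and $\rho_y=|H|-|B_y|$; if $C_y\ne\est$, Claim~\refm{E} forces $B^*_y=B_y\cap C_y\ne\est$, so $t_y=2$ and $\rho_y=2|H|-|B_y|-|C_y|$. Then bound $\rho_x$ and $\rho_z$ via~\refe{tx2} together with the pairwise inequalities $|A_x|+|B_y|\le|H|$, $|B_y|+|C_z|\le|H|$, and $|A_x|+|C_z|\le|H|$ from Lemma~\refl{box} applied to the all-partial triple $(A_x,B_y,C_z)$.

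The main obstacle is that these naive bounds do not control $|B_x|,|C_x|,|A_z|,|B_z|$ and hence fall short of $\rho\ge|H|$ on their own. I expect to close the gap by invoking Claim~\refm{A} at an auxiliary coset $x_0$ with $C^*_{x_0}$ full (such $x_0$ exists because $C^*$ is non-empty and $H$-periodic by Claim~\refm{D}), taking $z_0:=g_0-x_0-y\pmod H$. Because $A_y=\est$, the only permutations $(U,V,W)$ satisfying the hypothesis $V_y\ne\est$ of Claim~\refm{A} have $V\in\{B,C\}$; whenever a companion $W_{z_0}$ is non-empty, the conclusion $W_y\ne\est$ combined with $A_y=\est$ forces $A_{z_0}=\est$ and additionally $B^*_y=B^*_{z_0}=\est$. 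Combining the latter with Claim~\refm{E} (applied to the two non-empty slices $B_y,C_y$) yields an immediate contradiction in the case $C_y\ne\est$, while in the remaining case $C_y=\est$ the analogous analysis with $V=B,W=C$ at slice $z_0$ drives $B_{z_0}=C_{z_0}=\est$, severely restricting the structure of the coset $z_0+H$ and producing the missing contribution to $\rho$ needed to reach $|H|$.
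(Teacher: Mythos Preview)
Your opening paragraph (deriving $C^*_y=\est$ from Claim~\refm{D}) is correct and matches the paper's final sentence.

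The rest of the proposal, however, is not a proof but a plan, and the plan has a real gap. You correctly observe that the naive bounds on $\rho_x,\rho_y,\rho_z$ do not control $|B_x|,|C_x|,|A_z|,|B_z|$, and you propose to repair this by invoking Claim~\refm{A} at an auxiliary coset $x_0$ with $C^*_{x_0}$ full. But trace your own argument in the case $A_y=C_y=\est$: since only $B_y$ is non-empty, the hypothesis $V_y\ne\est$ of Claim~\refm{A} forces $V=B$, hence $W\in\{A,C\}$. Using fullness at $x_0$ (or Claim~\refm{A} itself) you can indeed conclude $A_{z_0}=C_{z_0}=\est$, but you \emph{cannot} conclude $B_{z_0}=\est$: the permutation ``$V=B,\,W=C$'' you cite requires $C_{z_0}\ne\est$, which you have just ruled out, and no permutation with $W=B$ is available. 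So at best you learn that the coset $z_0+H$ meets only $B$, and this gives no usable lower bound on $\rho$ tied to the original cosets $x,y,z$. A parallel issue arises in the case $C_y\ne\est$: there Claim~\refm{A} does yield a contradiction \emph{whenever some slice at $z_0$ is non-empty}, but you give no argument for the residual sub-case where $A_{z_0}=B_{z_0}=C_{z_0}=\est$; Lemma~\refl{emptystar} applied to $A^*_{z_0}=\est$ is consistent with this configuration and does not by itself produce $\rho\ge|H|$.

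The paper closes exactly this gap by a different mechanism: it invokes Corollary~\refc{F} (hence Claim~\refm{F}) to show that if all three of $A_w,B_w,C_w$ are non-empty with $A_w$ partial and $C^*_w=\est$, then $B_w$ and $C_w$ each lie in a coset of $\pi(A_w)$, which forces $\rho_w\ge\tfrac12|H|$ (equation~\refe{locG}). This is precisely the missing control over the ``extra'' slices that your plan lacks, and the remainder of the paper's proof is a case split on the coincidence pattern of $x,y,z$ modulo $H$. Note also that Corollary~\refc{F} is \emph{not} symmetric under swapping $A$ and $C$ (it bounds $B_z,C_z$ by $\pi(A_z)$, with no analogue for $\pi(C_z)$), so the symmetry reduction you propose at the outset would not survive if you were to import this tool; the paper accordingly treats $A_y$ and $C_y$ together rather than by symmetry.
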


\begin{proof}
Since $B_y$ is partial, there exist $x,z\in G$ such that
 $x+y+z\equiv g_0\pmod H$ and $A_x,C_z$ are both partial. We notice that none
of the slices $C^*_x,C^*_y$, and $C^*_z$ is full; hence by Claim~\refm{D},
all of them are actually empty.

Suppose that $w\in\{x,y,z\}$. If exactly one of the slices $A_w,B_w$, and
$C_w$ is non-empty, then denoting by $U$ the corresponding set from among
$A,B$, and $C$, we have
  $$ \rho_w = |\o{A^*_w}| = |\o{U_w}|. $$
If exactly two of $A_w,B_w$, and $C_w$ are non-empty, then $B^*_w\ne\est$ by
Claim~\refm{E}, and denoting by $U$ and $V$ the sets from among $A,B$, and
$C$ corresponding to the non-empty slices, we have
  $$ \rho_w = |\o{A^*_w}| + |\o{B^*_w}| = |\o{U_w}| + |\o{V_w}|. $$
If $A_w,B_w$, and $C_w$ are all non-empty with $A_w$ partial, then
$B^*_w\ne\est$ by Claim~\refm{E}, and by Corollary~\refc{F}, each of $B_w$
and $C_w$ lies in a coset of $\pi(A_w)$ (recall that $C^*_w=\emptyset$ as
noted at the beginning of the proof); consequently,
\begin{equation}\label{e:locG}
  \rho_w = 2|H|-|A_w|-|B_w|-|C_w|
            \ge 2|H| - 2|\pi(A_w)| - (|H|-|\pi(A_w)|) \ge \frac12\,|H|.
\end{equation}
With these preliminary observations, we can now prove that $A_y$ and $C_y$
are non-empty.

If $x\equiv y\equiv z\pmod H$, then the assertion is immediate as we have
chosen $x$ and $z$ so that $A_x,C_z\ne\est$. Assume now that $x,y$, and $z$
all are different modulo $H$, and, for a contradiction, that there are at
most two non-empty slices among $A_y,B_y,C_y$. As we have shown above, the
latter assumption implies $\rho_y\ge|\o{B_y}|$ (as $B_y$ is nonempty by
hypothesis). Also, if there were at most two non-empty slices among $A_x$,
$B_x$ and $C_x$, then we would have $\rho_x\ge|\o{A_x}|$, whence
Lemma~\refl{box} yields
  $$ \rho\ge\rho_x+\rho_y\ge |\overline{A_x}|+|\overline{B_y}|
                                          = 2|H|-(|A_x|+|B_y|) \ge |H|, $$
which contradicts~\refe{rho}. Thus, $A_x,B_x$, and $C_x$ are all non-empty,
and in a similar way, $A_z,B_z$, and $C_z$ are all non-empty. Now
$A_x+B_y+C_z\ne g_0+H$ shows that $A_x$ is partial, and $A_z+B_y+C_x\ne
g_0+H$ shows that $A_z$ is partial. Hence, \refe{locG} yields
  $$ \rho \ge \rho_x+\rho_z \ge |H|, $$
contradicting~\refe{rho}.

We have thus shown that exactly two of $x,y$, and $z$ coincide modulo
$H$. If $x\equiv y\not\equiv z\pmod H$, then $A_y=A_x\ne\est$ whence,
assuming $C_y=\est$, we would get
  $$ \rho \ge \rho_y = |\o{A_y}|+|\o{B_y}| = 2|H|-|A_x|-|B_y| \ge |H| $$
by Lemma~\refl{box}. In a similar way we obtain a contradiction if
$x\not\equiv y\equiv z\pmod H$, and it remains to consider the case where
$x\equiv z\not\equiv y\pmod H$. If in this case $B_x=B_z=\est$, then we
obtain a contradiction from
  $$ \rho\ge \rho_x+\rho_y\ge|\o{A_x}|+|\o{B_y}|
                                          = 2|H|-|A_x|-|B_y| \ge |H|, $$
the last estimate following by Lemma~\refl{box}.

Assume therefore that $B_x=B_z\ne\est$. In this case $A_x$, $B_x$, and
$C_x=C_z$ are all non-empty while $C^*_x=\est$, whence
\begin{equation}\label{e:locG98}
  |B_x|\le|\pi(A_x)|
\end{equation}
by Corollary~\refc{F}. On the other hand,
\begin{equation}
  |A_x| \le |H| - |\pi(A_x)|
\end{equation}
since $A_x$ is partial (see the beginning of the proof). Furthermore,
$A_x,B_x,C_x\ne\est$ yields $B^*_x\ne\est$ by Claim~\refm{E}, implying
\begin{equation}
  \rho_x \ge 2|H|-|A_x|-|B_x|-|C_x|
\end{equation}
in view of~\refe{tx1}, while
\begin{equation}
  \rho_y \ge |\o{B_y}| = |H|-|B_y|
\end{equation}
as follows from an observation at the beginning of the proof. Finally,
\begin{equation}\label{e:locG99}
  |B_y|+|C_x| = |B_y|+|C_z| \le |H|
\end{equation}
by Lemma~\refl{box}. Combining~\refe{locG98}--\refe{locG99}, we get
\begin{multline*}
  \rho\ge\rho_x+\rho_y \ge (2|H|-|A_x|-|B_x|-|C_x|)+(|H|-|B_y|) \\
    = 3|H| - (|B_y|+|C_x|)-(|A_x|+|B_x|) \ge |H|,
\end{multline*}
contradicting~\refe{rho}. This shows that $A_y$ and $C_y$ are both nonempty,
and now $C^*_y=\emptyset$ follows from Claim D, else $B_y$ would be full,
contrary to hypothesis.
\end{proof}

\subsection{Conclusion of the proof.}

We are ready to complete the proof of the Main Lemma.

Let  $y\in G$ be an arbitrary element such that the slice $B_y$ is partial.
(Notice that such elements exist since otherwise $B$ would be $H$-periodic,
while we assume that $(A,B,C)$ is an aperiodic trio.) By Claim~\refm{G}, we
have $A_y,C_y\ne C^*_y=\est$, and keeping the notation $Z$, $K_B$, and $K_C$
of Claim~\refm{F}, we then conclude that $y\in Z$ and $K_C\le\pi(B_y)$. Thus,
every partial slice of $B$ is $K_C$-periodic, and it follows that $B$ itself
is $K_C$-periodic, implying $K_C=\{0\}$; that is, $|C_z|=1$ for each
 $z\in Z$. In particular, $|C_y|=1$.

If now $A_z$ were not full for some $z\in Z$, then we would have
$|A_z|\le|H|-|K_B|$ by Claim~\refm{F}, and since $B^*_z$ is non-empty by
Claim~\refm{E}, using~\refe{tx1} we would obtain
  $$ \rho \ge \rho_y \ge 2|H|-|A_z|-|B_z|-|C_z|
                          \ge 2|H| - (|H|-|K_B|) - |K_B| - 1 = |H|-1, $$
contradicting~\refe{rho}. Thus $A_z$ is full for every $z\in Z$. In
particular, $A_y$ is full and
  $$ \rho_y \ge  2|H|-|A_y|-|B_y|-|C_y| = |H|-|B_y|-1. $$

Since $B_y$ is partial, we can find $x,z\in G$ with $x+y+z\equiv g_0\pmod H$
and $A_x,C_z\ne\est$. By Claim~\refm{D}, for each $w\in\{x,y,z\}$ we have
$C^*_w=\est$: for otherwise $A_w$, $B_w$, and $C_w$ all would be full,
leading to $A_x+B_y+C_z=g_0+H$. Consequently, there is at least one empty
slice among $A_x,B_x$, and $C_x$: else $x\in Z$ and (as we have just shown)
$A_x$ would then be  full, whence $A_x+B_y+C_z=g_0+H$. Since, in contrast,
$A_y$, $B_y$ and $C_y$ are all non-empty, we have $x\not\equiv y\pmod H$.
Furthermore, arguing as at the beginning of the proof of Claim~\refm{G}, we
get $$\rho_x\ge|\o{A_x}|.$$  We have shown that that $x\not\equiv y\pmod H$,
whence combining the above inequalities yields
  $$ \rho \ge \rho_x+\rho_y \ge |\o{A_x}|+(|H|-|B_y|-1)
                                      = 2|H|-1-(|B_y|+|A_x|) \ge |H|-1 $$
by Lemma~\refl{box}, which contradicts \eqref{e:rho} and thus completes the
proof.

\section{Concluding remarks.}\label{s:concrem}

In hindsight, the following stronger (and simpler) version of the Main Lemma
follows easily from Theorem~\reft{Krest}.
\begin{lemma}\label{l:IWML}
Suppose that $(A,B,C)$ is an aperiodic, maximal, deficient trio, and let
$(A^*,B^*,C^*):=\tau(A,B,C)$ and $H:=\pi(A^*+B^*+C^*)$. If $C^*\ne\est$, then
  $$ |(A^*+H)\stm A^*|+|(B^*+H)\stm B^*|+|(C^*+H)\stm C^*| \ge |H|-1. $$
\end{lemma}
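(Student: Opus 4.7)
With Theorem~\reft{Krest} now established, the proof reduces to a short divisibility argument. Set
\[
\rho := |(A^*+H)\stm A^*|+|(B^*+H)\stm B^*|+|(C^*+H)\stm C^*|;
\]
the goal is to show $\rho\ge|H|-1$. The plan is to compare the trio $(A^*,B^*,C^*)$ with its $H$-periodic closure $(A^*+H,B^*+H,C^*+H)$ and exploit the fact that the deficiency of the latter is forced to be divisible by $|H|$.

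First I would verify that $(A^*+H,B^*+H,C^*+H)$ is itself a trio: its sumset equals $A^*+B^*+C^*+H=A^*+B^*+C^*$ (as $H=\pi(A^*+B^*+C^*)$), which is contained in $A+B+C\ne G$; each component is non-empty (since $\est\ne C^*\seq B^*\seq A^*$) and each is finite or co-finite. Assuming without loss of generality, in the infinite case, that $A$ (and hence $A^*$) is co-finite, a direct expansion of the definition of deficiency, via $|G\stm A^*|-|G\stm(A^*+H)|=|(A^*+H)\stm A^*|$ (and analogous identities for $B^*$ and $C^*$), yields
\[
\del(A^*+H,B^*+H,C^*+H)-\del(A^*,B^*,C^*)=\rho.
\]
By Lemma~\refl{major-infi} applied with $k=n=3$, we have $\del(A^*,B^*,C^*)=\del(A,B,C)$, and Corollary~\refc{maxdef} applied to the aperiodic, maximal, deficient trio $(A,B,C)$ gives $\del(A,B,C)=|\pi(A+B+C)|=1$. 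Combining these,
\[
\del(A^*+H,B^*+H,C^*+H)=\rho+1.
\]

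Finally, I would observe that each of $B^*+H$, $C^*+H$, and $G\stm(A^*+H)$ is a finite union of $H$-cosets, hence has cardinality divisible by $|H|$; consequently $\del(A^*+H,B^*+H,C^*+H)$ is a multiple of $|H|$, and therefore $\rho+1\equiv 0\pmod{|H|}$. Since $\rho\ge 0$, we have $\rho+1\ge 1>0$, so the only way for $\rho+1$ to be a positive integer multiple of $|H|$ is $\rho+1\ge|H|$, which gives $\rho\ge|H|-1$, as required. There is no substantial obstacle: once Theorem~\reft{Krest} pins down $\del(A,B,C)=1$, the conclusion is just the elementary fact that a positive multiple of $|H|$ is at least $|H|$.
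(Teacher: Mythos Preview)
Your proof is correct and follows essentially the same approach as the paper's: both pin down $\del(A^*,B^*,C^*)=\del(A,B,C)=1$ via Lemma~\refl{major-infi} and Corollary~\refc{maxdef}, then use that $\del(A^*+H,B^*+H,C^*+H)$ is a positive multiple of $|H|$ to conclude. The only cosmetic difference is that the paper phrases the divisibility step via the quotient homomorphism $\phi_H$, whereas you argue it directly from the components being unions of $H$-cosets.
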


Lemma~\refl{IWML} is the ``ideal-world main lemma''. To derive it from
Theorem~\reft{Krest}, notice that by Lemma~\refl{major-infi} and
Corollary~\refc{maxdef},
  $$ \del(A^*+H,B^*+H,C^*+H) \ge \del(A^*,B^*,C^*) = \del(A,B,C) = 1, $$
and that
  $$ \del(A^*+H,B^*+H,C^*+H) =
                          |H|\,\del(\phi_H(A^*),\phi_H(B^*),\phi_H(C^*)). $$
Thus, we have in fact
  $$ \del(A^*+H,B^*+H,C^*+H) \ge |H|, $$
which implies
  $$ \del(A^*+H,B^*+H,C^*+H)-\del(A^*,B^*,C^*) \ge |H|-1. $$
This is equivalent to the inequality of Lemma~\refl{IWML}.

It is a major challenge to give Lemma~\refl{IWML} a simple, independent
proof.

Interestingly, Theorem~\reft{Krest} is equivalent to the following statement:
\begin{equation}\label{e:dag}
  \text{For any \emph{maximal} trio $(A,B,C)$, we have}\ %
                                            \del(A,B,C) \le |\pi(A+B+C)|.
\end{equation}
To derive Theorem~\reft{Krest} from \refe{dag}, given a trio $(A,B,C)$,
construct $(A',B',C')$ as in Lemma~\refl{astrio}. Since $(A',B',C')$ is
maximal, applying \refe{dag} to it we get
  $$ \del(A,B,C)\le \del(A',B',C') \le |\pi(A'+B'+C')| \le |\pi(A+B+C)|. $$

An easy consequence of Theorem~\reft{Krest} is a characterization of
deficient trios as those which can be obtained be removing few elements from
a \emph{maximal} deficient trio.
\begin{claim}
A trio $(A,B,C)$ is deficient if and only if there exists a \emph{maximal}
deficient trio $(A',B',C')$ such that $A\seq A'$, $B\seq B'$, $C\seq C'$, and
\begin{equation}\label{e:locclaim2}
  |A'\stm A|+|B'\stm B|+|C'\stm C| < |\pi(A'+B'+C')|.
\end{equation}
\end{claim}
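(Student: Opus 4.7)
The plan is to reduce both directions to Corollary~\refc{maxdef} via the following additivity identity: whenever $(A,B,C)\seq(A',B',C')$ are trios in the same group (with a common co-finite component, if any),
$$ \del(A',B',C')-\del(A,B,C) \;=\; |A'\stm A|+|B'\stm B|+|C'\stm C|. $$
In the finite-group case this is immediate from $\del=|A|+|B|+|C|-|G|$. In the general case, assuming $A$ is co-finite and $B,C$ finite, the set $A'\supseteq A$ is also co-finite, so $\overline{A'}\seq\overline{A}$ and $|A'\stm A|=|\overline{A}|-|\overline{A'}|$; the identity then follows from $\del=-|\overline{A}|+|B|+|C|$. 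Establishing this identity would be the first step.

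For the backward direction, assume such a maximal deficient $(A',B',C')$ exists. Then $A+B+C\seq A'+B'+C'\ne G$, so $(A,B,C)$ is indeed a trio, and Corollary~\refc{maxdef} gives $\del(A',B',C')=|\pi(A'+B'+C')|$. Combined with the identity and assumption~\refe{locclaim2},
$$ \del(A,B,C)=|\pi(A'+B'+C')|-\bigl(|A'\stm A|+|B'\stm B|+|C'\stm C|\bigr)>0, $$
so $(A,B,C)$ is deficient.

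For the forward direction, assume $(A,B,C)$ is deficient. Apply Lemma~\refl{astrio} to embed $(A,B,C)$ into a maximal trio $(A',B',C')$. Enlarging any component only increases deficiency, so $\del(A',B',C')\ge\del(A,B,C)>0$, meaning $(A',B',C')$ is itself deficient and maximal, whence $\del(A',B',C')=|\pi(A'+B'+C')|$ by Corollary~\refc{maxdef}. Substituting into the identity yields
$$ |A'\stm A|+|B'\stm B|+|C'\stm C| \;=\; |\pi(A'+B'+C')|-\del(A,B,C) \;<\; |\pi(A'+B'+C')|, $$
which is precisely~\refe{locclaim2}. There is no real obstacle here; the only mildly delicate point is verifying the deficiency identity in the co-finite case, which the first paragraph above handles.
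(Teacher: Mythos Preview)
Your proof is correct and follows essentially the same approach as the paper: both directions reduce to Corollary~\refc{maxdef}, with the forward direction using Lemma~\refl{astrio} to produce the maximal supertrio. The only difference is cosmetic---you isolate the deficiency-difference identity as a preliminary step, whereas the paper uses it inline.
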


\begin{proof}
If  $(A',B',C')$ is maximal and $(A,B,C)$ satisfies \refe{locclaim2}, then by
Corollary~\refc{maxdef},
  $$ \del(A,B,C) = \del(A',B',C')
                       - (|A'\stm A|+|B'\stm B|+|C'\stm C|) > 0. $$
Conversely, given a deficient trio $(A,B,C)$, for the supertrio $(A',B',C')$
of Lemma~\refl{astrio} we have
\begin{multline*}
  \quad |A'\stm A|+|B'\stm B|+|C'\stm C| = \del(A',B',C') - \del(A,B,C) \\
       < \del(A',B',C') = |\pi(A'+B'+C')| \quad
\end{multline*}
(the last equality uses Corollary~\refc{maxdef} again).
\end{proof}

Finally, we note that Lemma~\refl{basic} can be extended to take into account
the number of representations of group elements.
\begin{lemma}
For any finite subsets $A_1\longc A_n$ of an abelian group, letting
$(A_1^*\longc A_n^*):=\tau(A_1\longc A_n)$, the number of representations of
any group element as $a_1^*\longp a_n^*$ (with $a_i^*\in A_i^*$ for each
$i\in[1,n]$) does not exceed the number of its representations as
 $a_1\longp a_n$ (with $a_i\in A_i$ for each $i\in[1,n]$).
\end{lemma}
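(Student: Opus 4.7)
My plan is to reduce to the two-set case and then iterate a ``compression'' operation. For $n=2$, writing $f_S(z)=\sum_{x\in S} z^x$ for the formal generating series of a set $S$, I would first establish the identity
$$
f_A(z)\,f_B(z)-f_{A\cup B}(z)\,f_{A\cap B}(z) = f_{A\setminus B}(z)\cdot f_{B\setminus A}(z),
$$
which is immediate from the partitions $A=(A\cap B)\sqcup(A\setminus B)$, $B=(A\cap B)\sqcup(B\setminus A)$, and $A\cup B=(A\cap B)\sqcup(A\setminus B)\sqcup(B\setminus A)$. Its right side has non-negative coefficients, so the number of pairs $(a,b)\in A\times B$ summing to any given $g$ is at least the number of pairs $(a,b)\in(A\cup B)\times(A\cap B)$ summing to $g$. (A direct injection between these two sets of representations is also available: send $(a,b)$ to $(a,b)$ if $a\in A$, and to $(b,a)$ otherwise.)

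For general $n$, introduce the shift that replaces some chosen pair $(A_i,A_j)$ by $(A_i\cup A_j, A_i\cap A_j)$ while leaving the remaining components of the system intact. Multiplying the $n=2$ identity above by $\prod_{k\ne i,j} f_{A_k}(z)$ --- a polynomial with non-negative coefficients --- shows at once that each such shift weakly decreases the number of representations of every group element. Moreover, the multiplicity $m(x)=\#\{i:x\in A_i\}$ of every ground-set element $x$ is preserved by the shift, because $\mathbf{1}[x\in A_i\cup A_j]+\mathbf{1}[x\in A_i\cap A_j]=\mathbf{1}[x\in A_i]+\mathbf{1}[x\in A_j]$.

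The main remaining obstacle is to show that a sequence of these shifts terminates at the starred configuration. For this I would use the potential $\Phi=\sum_i|A_i|^2$ and the algebraic identity
$$
|A_i\cup A_j|^2+|A_i\cap A_j|^2-|A_i|^2-|A_j|^2 = 2\bigl(|A_i|-|A_i\cap A_j|\bigr)\bigl(|A_j|-|A_i\cap A_j|\bigr)\ge 0,
$$
with equality precisely when $A_i\subseteq A_j$ or $A_j\subseteq A_i$. Hence every non-trivial shift strictly increases the bounded quantity $\Phi$, so the process halts at a configuration $(B_1,\ldots,B_n)$ in which every pair of sets is nested; after reordering, $B_{\sigma(1)}\supseteq\cdots\supseteq B_{\sigma(n)}$. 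Since $m$ is unchanged throughout, an element $x$ lies in $B_{\sigma(i)}$ if and only if $i\le m(x)$, which forces $B_{\sigma(i)}=\{x:m(x)\ge i\}=A_i^*$. Because the representation count depends only on the multiset of sets appearing, the desired inequality follows.
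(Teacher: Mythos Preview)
Your argument is correct. The paper explicitly omits the proof of this lemma, so there is nothing to compare against; your compression-via-pairwise-shifts approach is the standard route, and each step checks out: the generating-function identity for $n=2$ is verified by the partition you wrote down, multiplying by the remaining factors preserves coefficientwise inequalities, the multiplicity function $m$ is a shift invariant, and the potential $\Phi=\sum_i|A_i|^2$ is an integer bounded above (since $\sum_i|A_i|$ is fixed) that strictly increases under any non-nested shift, forcing termination at a chain which, by invariance of $m$, must be a permutation of $(A_1^*,\ldots,A_n^*)$.

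One small remark: the paper's proof of the weaker Lemma~\refl{basic} (the sumset containment $A_1^*+\cdots+A_n^*\subseteq A_1+\cdots+A_n$) goes by directly constructing, for each single representation in the starred system, a matching representation in the original system via a greedy choice of indices. That construction does not obviously upgrade to an injection on representations, whereas your compression argument handles the counting version cleanly without needing an explicit bijection.
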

We omit the proof.

\vfill

\bigskip


\begin{thebibliography}{BDM15}

\bibitem[BDM15]{b:bdm}
  {\sc T.~Boothby, M.~DeVos}, and {\sc A. Montejano},
  A new proof of Kemperman's theorem,
  \emph{INTEGERS} {\bf 15} (2015), \#A15.

\bibitem[Kn53]{b:kn1} {\sc M.~Kneser},
  Absch\"atzung der asymptotischen Dichte von Summenmengen,
  \emph{Math. Z.} {\bf 58} (1953), 459--484.

\bibitem[Kn55]{b:kn2} \bysame,
  Ein Satz \"uber abelsche Gruppen mit Anwendungen auf die
  Geometrie der Zahlen,
  \emph{Math. Z.} {\bf 61} (1955), 429--434.

\end{thebibliography}
\end{document}